\documentclass[12pt]{amsart}
\usepackage{amssymb,latexsym,tikz}
\usetikzlibrary{arrows,decorations.markings}
\tikzstyle arrowstyle=[scale=1]
\tikzstyle directed=[postaction={decorate,
decoration={markings,mark=at position .65 with {\arrow[arrowstyle]{stealth}}}}]

\usepackage{extdash}
\usepackage{mathtools} 
\usepackage{amscd}
\usepackage{xypic}
\usepackage[utf8]{inputenc}
\usepackage[top=40mm, bottom=35mm, left=40mm, right=35mm]{geometry}

\begin{document}

\title [The five-sequence of adjoints for simplicial complexes]
{The five-sequence of adjoints for combinatorial  simplicial complexes}

\author{Gunnar Fl{\o}ystad}
\address{Matematisk Institutt\\
         Postboks 7803\\
         5020 Bergen}
\email{gunnar@mi.uib.no}


\keywords{simplicial complex, Stanley-Reisner correspondence, adjunction,
category}
\subjclass[2020]{Primary: 13F55, 18B35 ; Secondary: 05E45}
\date{\today}


\theoremstyle{plain}
\newtheorem{theorem}{Theorem}[section]
\newtheorem{corollary}[theorem]{Corollary}
\newtheorem*{main}{Main Theorem}
\newtheorem{lemma}[theorem]{Lemma}
\newtheorem{proposition}[theorem]{Proposition}
\newtheorem{conjecture}[theorem]{Conjecture}
\newtheorem{theoremp}{Theorem}

\theoremstyle{definition}
\newtheorem{definition}[theorem]{Definition}
\newtheorem{fact}[theorem]{Fact}
\newtheorem{obs}[theorem]{Observation}
\newtheorem{definisjon}[theorem]{Definisjon}
\newtheorem{problem}[theorem]{Problem}
\newtheorem{condition}[theorem]{Condition}

\theoremstyle{remark}
\newtheorem{notation}[theorem]{Notation}
\newtheorem{remark}[theorem]{Remark}
\newtheorem{example}[theorem]{Example}
\newtheorem{claim}{Claim}
\newtheorem{observation}[theorem]{Observation}
\newtheorem{question}[theorem]{Question}


\newcommand{\psp}[1]{{{\bf P}^{#1}}}
\newcommand{\psr}[1]{{\bf P}(#1)}
\newcommand{\op}{{\mathcal O}}
\newcommand{\opw}{\op_{\psr{W}}}

\newcommand{\ini}[1]{\text{in}(#1)}
\newcommand{\gin}[1]{\text{gin}(#1)}
\newcommand{\kr}{{\Bbbk}}
\newcommand{\pd}{\partial}
\newcommand{\vardel}{\partial}
\renewcommand{\tt}{{\bf t}}


\newcommand{\coh}{{{\text{{\rm coh}}}}}


\newcommand{\modv}[1]{{#1}\text{-{mod}}}
\newcommand{\modstab}[1]{{#1}-\underline{\text{mod}}}

\newcommand{\sut}{{}^{\tau}}
\newcommand{\sumit}{{}^{-\tau}}
\newcommand{\til}{\thicksim}

\newcommand{\totp}{\text{Tot}^{\prod}}
\newcommand{\dsum}{\bigoplus}
\newcommand{\dprod}{\prod}
\newcommand{\lsum}{\oplus}
\newcommand{\lprod}{\Pi}

\newcommand{\La}{{\Lambda}}

\newcommand{\sirstj}{\circledast}

\newcommand{\she}{\EuScript{S}\text{h}}
\newcommand{\cm}{\EuScript{CM}}
\newcommand{\cmd}{\EuScript{CM}^\dagger}
\newcommand{\cmri}{\EuScript{CM}^\circ}
\newcommand{\cler}{\EuScript{CL}}
\newcommand{\clerd}{\EuScript{CL}^\dagger}
\newcommand{\clerri}{\EuScript{CL}^\circ}
\newcommand{\gor}{\EuScript{G}}
\newcommand{\cF}{\mathcal{F}}
\newcommand{\cG}{\mathcal{G}}
\newcommand{\cM}{\mathcal{M}}
\newcommand{\cE}{\mathcal{E}}
\newcommand{\cI}{\mathcal{I}}
\newcommand{\cP}{\mathcal{P}}
\newcommand{\cK}{\mathcal{K}}
\newcommand{\cS}{\mathcal{S}}
\newcommand{\cC}{\mathcal{C}}
\newcommand{\cO}{\mathcal{O}}
\newcommand{\cJ}{\mathcal{J}}
\newcommand{\cU}{\mathcal{U}}
\newcommand{\cQ}{\mathcal{Q}}
\newcommand{\cX}{\mathcal{X}}
\newcommand{\cY}{\mathcal{Y}}
\newcommand{\cZ}{\mathcal{Z}}
\newcommand{\cV}{\mathcal{V}}

\newcommand{\mm}{\mathfrak{m}}

\newcommand{\dlim} {\varinjlim}
\newcommand{\ilim} {\varprojlim}

\newcommand{\CM}{\text{CM}}
\newcommand{\Mon}{\text{Mon}}


\newcommand{\Kom}{\text{Kom}}


\newcommand{\EH}{{\mathbf H}}
\newcommand{\res}{\text{res}}
\newcommand{\Hom}{\text{Hom}}
\newcommand{\inhom}{{\underline{\text{Hom}}}}
\newcommand{\Ext}{\text{Ext}}
\newcommand{\Tor}{\text{Tor}}
\newcommand{\ghom}{\mathcal{H}om}
\newcommand{\gext}{\mathcal{E}xt}
\newcommand{\id}{\text{{id}}}
\newcommand{\im}{\text{im}\,}
\newcommand{\codim} {\text{codim}\,}
\newcommand{\resol}{\text{resol}\,}
\newcommand{\rank}{\text{rank}\,}
\newcommand{\lpd}{\text{lpd}\,}
\newcommand{\coker}{\text{coker}\,}
\newcommand{\supp}{\text{supp}\,}
\newcommand{\Ad}{A_\cdot}
\newcommand{\Bd}{B_\cdot}
\newcommand{\Fd}{F_\cdot}
\newcommand{\Gd}{G_\cdot}


\newcommand{\sus}{\subseteq}
\newcommand{\sups}{\supseteq}
\newcommand{\pil}{\rightarrow}
\newcommand{\vpil}{\leftarrow}
\newcommand{\rpil}{\leftarrow}
\newcommand{\lpil}{\longrightarrow}
\newcommand{\inpil}{\hookrightarrow}
\newcommand{\pils}{\twoheadrightarrow}
\newcommand{\projpil}{\dashrightarrow}
\newcommand{\dotpil}{\dashrightarrow}
\newcommand{\adj}[2]{\overset{#1}{\underset{#2}{\rightleftarrows}}}
\newcommand{\mto}[1]{\stackrel{#1}\longrightarrow}
\newcommand{\vmto}[1]{\stackrel{#1}\longleftarrow}
\newcommand{\mtoelm}[1]{\stackrel{#1}\mapsto}
\newcommand{\bihom}[2]{\overset{#1}{\underset{#2}{\rightleftarrows}}}
\newcommand{\eqv}{\Leftrightarrow}
\newcommand{\impl}{\Rightarrow}

\newcommand{\iso}{\cong}
\newcommand{\te}{\otimes}
\newcommand{\into}[1]{\hookrightarrow{#1}}
\newcommand{\ekv}{\Leftrightarrow}
\newcommand{\equi}{\simeq}
\newcommand{\isopil}{\overset{\cong}{\lpil}}
\newcommand{\equipil}{\overset{\equi}{\lpil}}
\newcommand{\ispil}{\isopil}
\newcommand{\vvi}{\langle}
\newcommand{\hvi}{\rangle}
\newcommand{\susneq}{\subsetneq}
\newcommand{\sgn}{\text{sign}}


\newcommand{\xd}{\check{x}}
\newcommand{\ortog}{\bot}
\newcommand{\tL}{\tilde{L}}
\newcommand{\tM}{\tilde{M}}
\newcommand{\tH}{\tilde{H}}
\newcommand{\tvH}{\widetilde{H}}
\newcommand{\tvh}{\widetilde{h}}
\newcommand{\tV}{\tilde{V}}
\newcommand{\tS}{\tilde{S}}
\newcommand{\tT}{\tilde{T}}
\newcommand{\tR}{\tilde{R}}
\newcommand{\tf}{\tilde{f}}
\newcommand{\ts}{\tilde{s}}
\newcommand{\tp}{\tilde{p}}
\newcommand{\tr}{\tilde{r}}
\newcommand{\tfst}{\tilde{f}_*}
\newcommand{\empt}{\emptyset}
\newcommand{\bfa}{{\mathbf a}}
\newcommand{\bfb}{{\mathbf b}}
\newcommand{\bfd}{{\mathbf d}}
\newcommand{\bfl}{{\mathbf \ell}}
\newcommand{\bfx}{{\mathbf x}}
\newcommand{\bfm}{{\mathbf m}}
\newcommand{\bfv}{{\mathbf v}}
\newcommand{\bft}{{\mathbf t}}
\newcommand{\bbfa}{{\mathbf a}^\prime}
\newcommand{\la}{\lambda}
\newcommand{\bfen}{{\mathbf 1}}
\newcommand{\bfe}{{\mathbf 1}}
\newcommand{\ep}{\epsilon}
\newcommand{\en}{r}
\newcommand{\tu}{s}
\newcommand{\Sym}{\text{Sym}}

\newcommand{\ome}{\omega_E}

\newcommand{\bevis}{{\bf Proof. }}
\newcommand{\demofin}{\qed \vskip 3.5mm}
\newcommand{\nyp}[1]{\noindent {\bf (#1)}}
\newcommand{\demo}{{\it Proof. }}
\newcommand{\demodone}{\demofin}
\newcommand{\parg}{{\vskip 2mm \addtocounter{theorem}{1}  
                   \noindent {\bf \thetheorem .} \hskip 1.5mm }}

\newcommand{\lcm}{{\text{lcm}}}


\newcommand{\dl}{\Delta}
\newcommand{\cdel}{{C\Delta}}
\newcommand{\cdelp}{{C\Delta^{\prime}}}
\newcommand{\dlst}{\Delta^*}
\newcommand{\Sdl}{{\mathcal S}_{\dl}}
\newcommand{\lk}{\text{lk}}
\newcommand{\lkd}{\lk_\Delta}
\newcommand{\lkp}[2]{\lk_{#1} {#2}}
\newcommand{\del}{\Delta}
\newcommand{\delr}{\Delta_{-R}}
\newcommand{\dd}{{\dim \del}}
\newcommand{\Del}{\Delta}

\newcommand{\bb}{{\bf b}}
\newcommand{\cc}{{\bf c}}
\newcommand{\xx}{{\bf x}}
\newcommand{\yy}{{\bf y}}
\newcommand{\zz}{{\bf z}}
\newcommand{\mv}{{\xx^{\aa_v}}}
\newcommand{\mF}{{\xx^{\aa_F}}}

\newcommand{\Symm}{\text{Sym}}
\newcommand{\pnm}{{\bf P}^{n-1}}
\newcommand{\opnm}{{\go_{\pnm}}}
\newcommand{\ompnm}{\omega_{\pnm}}

\newcommand{\pn}{{\bf P}^n}
\newcommand{\hele}{{\mathbb Z}}
\newcommand{\nat}{{\mathbb N}}
\newcommand{\rasj}{{\mathbb Q}}
\newcommand{\bfone}{{\mathbf 1}}

\newcommand{\dt}{\bullet}
\newcommand{\disk}{\scriptscriptstyle{\bullet}}

\newcommand{\cxF}{F_\dt}
\newcommand{\pol}{f}

\newcommand{\Rn}{{\mathbb R}^n}
\newcommand{\An}{{\mathbb A}^n}
\newcommand{\frg}{\mathfrak{g}}
\newcommand{\PW}{{\mathbb P}(W)}

\newcommand{\pos}{{\mathcal Pos}}
\newcommand{\g}{{\gamma}}

\newcommand{\Vaa}{V_0}
\newcommand{\Bp}{B^\prime}
\newcommand{\Bpp}{B^{\prime \prime}}
\newcommand{\bbp}{\mathbf{b}^\prime}
\newcommand{\bbpp}{\mathbf{b}^{\prime \prime}}
\newcommand{\bp}{{b}^\prime}
\newcommand{\bpp}{{b}^{\prime \prime}}

\newcommand{\oLa}{\overline{\Lambda}}
\newcommand{\ov}[1]{\overline{#1}}
\newcommand{\ovv}[1]{\overline{\overline{#1}}}
\newcommand{\tm}{\tilde{m}}
\newcommand{\po}{\bullet}

\newcommand{\surj}[1]{\overset{#1}{\twoheadrightarrow}}
\newcommand{\Supp}{\text{Supp}}

\def\CC{{\mathbb C}}
\def\GG{{\mathbb G}}
\def\ZZ{{\mathbb Z}}
\def\NN{{\mathbb N}}
\def\RR{{\mathbb R}}
\def\OO{{\mathbb O}}
\def\QQ{{\mathbb Q}}
\def\VV{{\mathbb V}}
\def\PP{{\mathbb P}}
\def\EE{{\mathbb E}}
\def\FF{{\mathbb F}}
\def\AA{{\mathbb A}}

\newcommand{\oR}{\overline{R}}
\newcommand{\bfu}{{\mathbf u}}
\newcommand{\nn}{{\mathbf n}}
\newcommand{\oa}{\overline{a}}
\newcommand{\cop}{\text{cop}}
\renewcommand{\op}{{{\text{op}}}}
\renewcommand{\mm}{{\mathbf m}}
\newcommand{\ngmi}{\text{neg}}
\newcommand{\up}{\text{up}}
\newcommand{\dw}{\text{down}}
\newcommand{\di}[1]{\hat{#1}}
\newcommand{\diw}[1]{\widehat{#1}}
\newcommand{\bo}{b}
\newcommand{\ub}{u}
\newcommand{\fs}{\infty}
\newcommand{\ifst}{\infty}
\newcommand{\mon}{{mon}}
\newcommand{\cl}{\text{cl}}
\newcommand{\intr}{\text{int}}
\newcommand{\ul}[1]{\underline{#1}}
\renewcommand{\ov}[1]{\overline{#1}}
\newcommand{\bipil}{\leftrightarrow}
\newcommand{\bfc}{{\mathbf c}}
\renewcommand{\mp}{m^\prime}
\newcommand{\np}{n^\prime}
\newcommand{\Mod}{\text{Mod }}
\newcommand{\Sh}{\text{Sh } }
\newcommand{\st}{\text{st}}
\newcommand{\hM}{\tilde{M}}
\newcommand{\hs}{\tilde{s}}
\newcommand{\ee}{\mathbf{e}}
\renewcommand{\dd}{\mathbf{d}}
\renewcommand{\en}{{\mathbf 1}}
\long\def\ignore#1{}
\newcommand{\lex}{{\text{lex}}}
\newcommand{\ordGL}{\succeq_{\lex}}
\newcommand{\ordG}{\succ_{\lex}}
\newcommand{\ordML}{\preceq_{\lex}}
\newcommand{\ordM}{\prec_{\lex}}
\newcommand{\tLa}{\tilde{\Lambda}}
\newcommand{\tGa}{\tilde{\Gamma}}
\newcommand{\STS}{\text{STS}}
\newcommand{\jj}{\mathbf{j}}
\renewcommand{\mod}{\text{ mod}\,}
\newcommand{\shmod}{\texttt{shmod}\,}
\newcommand{\hf}{\underline{f}}
\newcommand{\Glim}{\lim}
\newcommand{\Gcolim}{\colim}
\newcommand{\fm}{f^{\underline{m}}}
\newcommand{\fn}{f^{\underline{n}}}
\newcommand{\gn}{g_{\mathbf{|}n}}
\newcommand{\se}[1]{\overline{#1}}
\newcommand{\uu}{{!}}
\newcommand{\ii}{{\scalebox{0.78}{\rotatebox[origin=c]{180}{\rm{!}}}}}
\newcommand{\iig}{{\rotatebox[origin=c]{180}{\rm{!}}}}
\newcommand{\hhat}[1] {\hat{\hat{#1}}}
\renewcommand{\adj}{\dashv}
\renewcommand{\susneq}{\not \subseteq}
\newcommand{\Set}{\text{\bf Set}}
\newcommand{\con}{\text{con}}
\newcommand{\lrarr}{\, \Leftrightarrow \, }
\newcommand{\hhP}{\hhat{P}}
\newcommand{\hhA}{\hhat{A}}
\newcommand{\hhQ}{\hhat{Q}}
\newcommand{\hhB}{\hhat{B}}
\newcommand{\dhat}{\widehat{\widehat{}}}

\newcommand{\hP}{\hat{P}}
\newcommand{\hA}{\hat{A}}
\newcommand{\hQ}{\hat{Q}}
\newcommand{\hB}{\hat{B}}

\newcommand{\bfto}{{\mathbf 2}}
\renewcommand{\cD}{{\mathcal D}}
\newcommand{\cset}{{\mathbf {Set}}}

\newcommand{\ad}{\text{ad}}
\newcommand{\CMon}{{\mathbf{CMon}}}
\renewcommand{\kr}{{\mathbf k}}
\newcommand{\kCAlg}{{\mathbf{CAlg}_{\kr}}}
\newcommand{\cone}{{\text{cone}}}
\newcommand{\core}{{\text{{\it core}}}}
\newcommand{\pdel}{{\vardel \Delta}}
\newcommand{\ojoin}{{\bullet}}

\newcommand{\kring}{\kCAlg}
\newcommand{\scu}{{\mathbf {sc_1}}}
\newcommand{\sct}{{\mathbf {sc_2}}}
\newcommand{\sco}{{\mathbf {sc_0}}}
\newcommand{\scA}{{{\mathbf {sc}}_A}}
\newcommand{\scB}{{{\mathbf {sc}}_B}}

\newcommand{\SCu}{{\mathbf {SC_1}}}
\newcommand{\SCt}{{\mathbf {SC_2}}}
\newcommand{\SCo}{{\mathbf {SC_0}}}
\newcommand{\SCA}{{{\mathbf {SC}}_A}}
\newcommand{\SCB}{{{\mathbf {SC}}_B}}

\newcommand{\mult}{{\scalebox{0.5}{$\bullet$}}}
\newcommand{\pluss}{{\scalebox{0.5}{$+$}}}

\newcommand{\sqr}{{\text{\bf sqfRing}}}

\makeatletter
\newcommand{\colim@}[2]{%
  \vtop{\m@th\ialign{##\cr
    \hfil$#1\operator@font colim$\hfil\cr
    \noalign{\nointerlineskip\kern1.5\ex@}#2\cr
    \noalign{\nointerlineskip\kern-\ex@}\cr}}%
}
\newcommand{\colim}{%
  \mathop{\mathpalette\colim@{\rightarrowfill@\textstyle}}\nmlimits@
}
\makeatother

\makeatletter
\newcommand{\lim@}[2]{%
  \vtop{\m@th\ialign{##\cr
    \hfil$#1\operator@font lim$\hfil\cr
    \noalign{\nointerlineskip\kern1.5\ex@}#2\cr
    \noalign{\nointerlineskip\kern-\ex@}\cr}}%
}
\newcommand{\limi}{%
  \mathop{\mathpalette\lim@{\leftarrowfill@\textstyle}}\nmlimits@
}
\makeatother

\begin{abstract}
  For a set $A$ let $\SCA$ be the poset of simplicial complexes whose
  vertices are in $A$. For a function $f : A \pil B$ there
  are functors
  \[ f^{\uu \uu}, f^{**}, f^{\ii \ii} : \SCA \pil \SCB, \quad
f^{\uu *}, f^{\ii*} : \SCB \pil \SCA, \] 
forming a five sequence of adjoints
 \[ f^{\uu \uu} \adj f^{* \uu} \adj f^{* *} \adj f^{* \ii} \adj f^{\ii \ii}. \]
 We investigate in detail these functors, and
 use this to give three categorical structures on simplicial complexes
 on finite sets such that the Stanley-Reisner correspondence
to commutative monomial rings gives dualities. 
\end{abstract}
\maketitle

\section*{Introduction}

A (combinatorial) simplicial complex $X$ on a set $A$ is a family of
subsets of $A$ such that if $F \in X$ and $G \sus F$ then $G \in X$.
Let $\SCA$ be the poset (ordered by inclusion) of simplicial
complexes on $A$. Given a function $f : A \pil B$, 
there are functors (order-preserving maps)
\[ f^{\uu \uu}, f^{**}, f^{\ii \ii} : \SCA \pil \SCB, \quad
f^{\uu *}, f^{\ii*} : \SCB \pil \SCA, \]
such that these form a five sequence of adjoint functors
\[ f^{\uu \uu} \adj f^{* \uu} \adj f^{* *} \adj f^{* \ii} \adj f^{\ii \ii}. \]
We investigate these functors, and how they transform
simplicial complexes. In particular we do this in detail for
$f$ injective and for $f$ surjective.

\medskip
Let $k$ be a commutative ring (usually a field), and $k[x_A]$ the
polynomial ring with $x_a, a \in A$ as variables. To a simplicial complex
$X$ on a finite set $A$ is associated the {\it face ring} or {\it Stanley-Reisner ring}
$k[x_A]/I_X$ where $I_X \sus k[x_A]$ is the ideal generated by monomials
$\prod_{i \in N} x_i$ where $N \sus A$ ranges over the subsets not in $X$.

Since their introduction in the mid-seventies,
Stanley-Reisner rings have been massively studied, see standard
textbooks \cite{HeHi, MiSt, Stanley, Vill}. The important
notions of Cohen-Macaulay
and Gorenstein simplicial complexes are defined via their Stanley-Reisner
rings. Major achievements of this correspondence was R.Stanley's
proof in 1975 of the Upper Bound Conjecture for simplicial spheres
\cite{St1},
and some years later the characterization of $f$-vectors of simplicial
polytopes, \cite{BillLee, St2}.

Normally in mathematics a
correspondence between geometry and algebra is functorial. This also
holds for the Stanley-Reisner correspondence. But this has received little
(if any) attention, perhaps due to a certain unnaturality: the morphisms of
rings do not respect multigradings.
Using the above functors we give two new categorical structures on
simplicial complexes on finite sets, and define three categorical
structures on Stanley-Reisner rings. We get three dualities
between categories of simplicial complexes and categories of
Stanley-Reisner rings. The two new dualities introduced respect multigradings,
as the ring homomorphisms send homogeneous elements to homogeneous elements.

Let us mention another recent direction, \cite{SS}, considering
simplicial complexes and Stanley-Reisner theory
from a more principled setting. It
relates to algebraic geometry and sheaf theory. In particular
the set of subsets of a finite $n$-set is considered as the affine scheme
${\mathbb A}^n_{{\mathsf F}_1}$ over the ``field with one element''.
An early contribution in this direction is \cite{Ya}.

The organization of this article is as follows. Section \ref{sec:poset}
contains preliminaries on posets, Galois correspondences and adjoint
functors. Section \ref{sec:sicx} details the notion of simplicial complexes
and establishes the five-sequence of adjoint functors. Section \ref{sec:AB}
gives explicit descriptions of these functors. Section \ref{sec:functSR}
gives two new categorical structures on simplicial complexes making
the Stanley-Reisner correspondence functorial.

In Section \ref{sec:alexD} we show how the functors interact with
Alexander duality. The short Section \ref{sec:term} recalls some
terminology for simplicial complexes. Section \ref{sec:inj} gives more
detail on the functors when $f : A \pil B$ is injective, and Section
\ref{sec:surj} does the same when $f : A \pil B$ is surjective.
In the last Section \ref{sec:prod} we use the functors to define
several products of simplicial complexes $X, Y$ on disjoint
sets $A$ and $B$.

\medskip
\noindent {\it Note.} Simplicial complexes
are assumed to be defined on arbitrary
sets, usually denoted $A$ or $B$. But in statements related to
polynomial rings $k[x_A]$ or $k[y_B]$, we always assume $A$ and $B$ to
be finite.

\section{Preliminaries on posets}
\label{sec:poset}
Posets give rise to distributive lattices, and we recall this. We
also see how an order-preserving morphism between posets $f : P \pil Q$,
gives rise to three adjoint functors $f^\uu \adj f^* \adj f^\ii$
between the associated distributive lattices. 

\subsection{Galois correspondences
  and minimal and maximal elements}
\label{subsec:adj-maxmin}
Given posets $P,Q$. Recall that a Galois correspondence between
them is two order-preserving maps 
 $P \bihom{F}{G} Q$ such that $F(p) \leq q$ if and only if
 $p \leq G(q)$. 
The following is immediate:
\begin{lemma} \label{lem:poset-maxmin} Given such a Galois correspondence.
  Let $p_0 \in P$ and $q_0 \in Q$.
  \begin{itemize}
  \item[a.] The unique maximal $p \in P$ such that $F(p) \leq q_0$,
    is $p = G(q_0)$.
  \item[b.] The unique minimal $q \in Q$ such
    that $G(q) \geq p_0$ is $q = F(p_0)$.
  \end{itemize}
\end{lemma}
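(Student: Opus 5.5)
Both parts follow directly from the defining biconditional of the Galois correspondence, $F(p) \leq q \iff p \leq G(q)$, together with reflexivity of the partial orders. Part b is the dual of part a, obtained by swapping the roles of $P$ and $Q$ and reversing the inequalities.

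For part a, fix $q_0 \in Q$ and set $S = \{p \in P : F(p) \leq q_0\}$. The biconditional with $q = q_0$ gives $S = \{p \in P : p \leq G(q_0)\}$, the principal down-set of $G(q_0)$. Reflexivity gives $G(q_0) \leq G(q_0)$, so $G(q_0) \in S$. Every element of $S$ lies below $G(q_0)$. Hence $G(q_0)$ is the greatest element of $S$. In particular it is the unique maximal element: if $p$ is maximal in $S$, then $p \leq G(q_0)$ with $G(q_0) \in S$ forces $p = G(q_0)$ by antisymmetry. Equivalently, $F(G(q_0)) \leq q_0$ is the counit inequality, obtained by putting $p = G(q_0)$ in the biconditional.

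For part b, fix $p_0 \in P$ and set $T = \{q \in Q : G(q) \geq p_0\}$. The biconditional with $p = p_0$ gives $T = \{q \in Q : F(p_0) \leq q\}$, the principal up-set of $F(p_0)$. By reflexivity $F(p_0) \in T$, and every element of $T$ lies above $F(p_0)$. So $F(p_0)$ is the least element of $T$, and antisymmetry makes it the unique minimal element, exactly as in part a.

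No step is a real obstacle. The only point needing care is the phrase ``unique maximal'' (resp. ``unique minimal''). The argument shows the stronger fact that $G(q_0)$ is a greatest element (resp. $F(p_0)$ a least element), and antisymmetry then yields uniqueness. Monotonicity of $F$ and $G$ is not used.
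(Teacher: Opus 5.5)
Your proof is correct and is essentially the intended argument. The paper gives no proof beyond calling the lemma immediate, and your unwinding of $F(p)\leq q_0 \iff p\leq G(q_0)$ shows that $G(q_0)$ is the greatest element of $\{p : F(p)\leq q_0\}$, and dually that $F(p_0)$ is the least element of $\{q : G(q)\geq p_0\}$. That is exactly what the paper leaves to the reader.
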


A Galois correspondence is a special case of the much more
general notion of an adjunction between categories.
We will use the adjunction terminology instead of Galois correspondence.

\subsection{The lattice of cuts in a posets}
\label{subsec:ds-adj}
For posets $P,Q$, denote by $\Hom(P,Q)$ the set of order-preserving
maps between them. This is itself a poset by $\phi \leq \psi$ if
$\phi(p) \leq \psi(p)$ for every $p \in P$. Denote $\bfto := \{0 < 1\}$. 

A {\it down-set} $D$ in a poset $P$
is a subset of $P$ such that if $q \in D$ and $p \leq q$, then $p \in D$.
For a poset $P$, denote by $P^{\op}$
the {\it opposite poset} with order $p \leq_{\op} q$ if $p \geq q$.
The lattice of down-sets of $P$ identifies as $\Hom(P^{\op}, \bfto)$.
A down-set $D$ gives a map
\[ \phi : P^{\op} \pil \bfto, \quad \phi(p) = \begin{cases} 1, & p \in D \\
0, & p \not \in D \end{cases}. \]
For short, denote by $\hP$ the set of order-preserving maps
$\Hom(P^{\op}, \bfto)$,
which we equally well identify as the down-sets of $P$.
It is the distributive lattice associated to $P$. (Birkhoff's
representation theorem says that any distributive lattice is of this
form, \cite{PD}.)

If $D$ is a down-set, the complement $U$ of $D$ in $P$ is an {\it up-set}.
We may consider the elements of $\hP$ to be {\it cuts} $(D,U)$.
We sometimes write this as $(D,-)$ or $(-,U)$ according to which part
we focus on.

\medskip
Given an order-preserving map $f : P \pil Q$. This induces an
order-preserving map
\[ f^* : \Hom(Q^{\op}, \bfto) \pil \Hom(P^{\op}, \bfto), \,
\text{ or for short } f^* : \hQ \pil \hP, \] 
sending a cut $(D,U)$ of $Q$ to the cut $(f^{-1}(D), f^{-1}(U)$ of $P$.
This map $f^*$ has both a left and a right adjoint
\[ f^\uu, f^\ii : \hP \pil \hQ, \quad f^\uu \adj f^* \adj f^\ii, \]
as we now explain. 
For a subset $S$ of the poset $P$, denote by $S^{\downarrow}$ the down-set
of $P$ generated by  $S$, so
\[ S^{\downarrow} = \{ p \in P \, | \, p \leq s \text{ for some } s \in S \}.
\]
Similarly we have $S^{\uparrow}$, the up-set generated by $S$.
The adjoints are then given by
\[ f^\uu(D,U) = (f(D)^{\downarrow}, -), \quad
f^\ii(D,U) = (-, f(U)^{\uparrow}). \]

\begin{example}
Consider $A$ to be a set of apples and $B$ buckets. The function $f : A \pil B$
puts apples into buckets. Then:
\begin{itemize}
\item $f^* : \hB \pil \hA$ sends a set of buckets
$C \sus B$ to the set of apples contained in these buckets, $f^{-1}(C) \sus A$.
\item $f^\uu : \hA \pil \hB$ sends a subset of apples $D \sus A$ to the
buckets containing {\it some} element from $D$ (the ``exist-a-$D$ apple'' buckets)
\item $f^\ii : \hA \pil \hB$ sends the subset $D$ to the buckets
which {\it only} contain apples from $D$ (the ``all-$D$ apple'' buckets)
\end{itemize}
\end{example}

Given a set map $f : A \pil B$ and $D \sus A$, the {\it $f$-core} of $D$
is the largest $C \sus B$ such that $f^{-1}(C) \sus D$. We write
$\core_f (D)$ for this $C$.
Note that $\core_f(D)$ is the set of all $b$ such that the full fiber
$f^{-1}(b)$ is contained in $D$. In particular if $E = B \backslash f(A)$,
then $E$ is contained in every $\core_f(D)$. 
For $(D,U) \in \hA$ we observe that:
\begin{itemize}
\item $f^\uu(D,U) = (f(D), \core_f(U))$,
\item $f^\ii(D,U) = (\core_f(D), f(U))$.
\end{itemize}


\medskip    
We get the following from Lemma \ref{lem:poset-maxmin}
concerning fibers when we have both
a left and right adjoint. 
\begin{corollary} \label{lem:adj-fiber} Given adjoints
  $f^\uu \adj f^* \adj f^\ii$ of posets:
  \[ f^\uu, f^\ii : P \pil Q, \quad f^* :  Q \pil P. \]
  Suppose $f^*(q) = p$, then such $q$ giving value $p$,
  are precisely those in the interval
  $f^\uu(p) \leq q \leq f^\ii(p)$ (note that the interval may be empty). 
\end{corollary}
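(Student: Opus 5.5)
We prove the two inclusions separately, using only the two adjunctions $f^\uu \adj f^*$ and $f^* \adj f^\ii$. By the definition of adjunction between posets, these say that for all $p \in P$ and $q \in Q$,
\[ f^\uu(p) \leq q \iff p \leq f^*(q), \qquad f^*(q) \leq p \iff q \leq f^\ii(p). \]

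First, suppose $f^*(q) = p$. Then in particular $p \leq f^*(q)$, so the first adjunction gives $f^\uu(p) \leq q$. Likewise $f^*(q) \leq p$, so the second adjunction gives $q \leq f^\ii(p)$. Hence every $q$ with $f^*(q)=p$ lies in the interval $f^\uu(p) \leq q \leq f^\ii(p)$.

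Conversely, suppose $f^\uu(p) \leq q \leq f^\ii(p)$. Reading the first adjunction from right to left, $f^\uu(p) \leq q$ gives $p \leq f^*(q)$. Similarly $q \leq f^\ii(p)$ gives $f^*(q) \leq p$. By antisymmetry in $P$, $f^*(q) = p$.

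This can also be phrased through Lemma~\ref{lem:poset-maxmin}: $f^\uu(p)$ is the minimal $q$ with $f^*(q) \geq p$, and $f^\ii(p)$ is the maximal $q$ with $f^*(q) \leq p$, and the fiber is the intersection of these two sets. The direct argument is simpler, so we use that.

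Finally, the parenthetical remark needs no argument. If $p$ is not in the image of $f^*$, the fiber is empty, and by the equivalence just shown so is the interval; this simply means $f^\uu(p) \not\leq f^\ii(p)$. There is no real obstacle here. The only care needed is to apply each adjunction in the correct direction, and to note that monotonicity of $f^*$ is not even used.
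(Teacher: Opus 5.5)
Your argument is correct and is essentially the paper's. The paper gets $f^\uu(p) \leq q \leq f^\ii(p)$ from Lemma~\ref{lem:poset-maxmin} and proves the converse via the chain $p \leq f^* \circ f^{\uu}(p) \leq f^*(q) \leq f^* \circ f^{\ii}(p) \leq p$ (unit, monotonicity of $f^*$, counit), which is the unit/counit form of the two adjunction equivalences you apply directly before invoking antisymmetry.
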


\begin{proof}
  If $f^*(q) = p$, by Lemma \ref{lem:poset-maxmin}
  $f^\uu(p) \leq q \leq f^\ii(p)$. Furthermore  
  \[ p \leq  f^* \circ f^{\uu}(p) \leq f^*(q) \leq f^* \circ f^{\ii}(p) \leq p, \]
  so all give value $p$. 
\end{proof}

\subsection{The general setting}
Before proceeding we note how this fits in a very general setting.
For categories $\cC$ and $\cD$ denote by $\Hom(\cC, \cD)$ the functors
from $\cC$ to $\cD$. This becomes itself a category with morphisms
between functors $F$ and $G$ the natural transformations $F \Rightarrow G$.
Denote by $\cset$ the category of sets. The category of presheaves on
$\cC$ has objects the functors $\Hom(\cC^{\op}, \cset)$.
Given a functor $F : \cC \pil \cD$, let $\cZ$ be another category.
This induces a functor
\[ F^* : \Hom(\cD^{\op}, \cZ) \pil \Hom(\cC^{\op}, \cZ). \]
One may ask if this functor has a left or right adjoint. Adjoints
in this setting are called Kan extensions. We have the following
\cite[Cor.6.2.6]{Riehl}.

\begin{proposition} \label{pro:adj-existence}
  Suppose $\cC$ and $\cD$ are small categories and
  $F : \cC \pil \cD$ a functor. If $\cZ$ is complete and co-complete
  (i.e. has all limits and colimits), then
  $F^* : (\cD^{\op}, \cZ) \pil (\cC^{\op}, \cZ)$ has a left adjoint
  $F^\uu$ and a right adjoint $F^\ii$.
\end{proposition}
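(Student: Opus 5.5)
The plan is to construct the two adjoints explicitly as pointwise Kan extensions, and then check the adjunction bijections directly against the universal properties of colimits and limits in $\cZ$. Fix a presheaf $X : \cC^{\op} \pil \cZ$. For $d \in \cD$ let $(F \downarrow d)$ denote the comma category of pairs $(c, \alpha : F(c) \pil d)$, and $(d \downarrow F)$ the comma category of pairs $(c, \beta : d \pil F(c))$. Since $\cC$ and $\cD$ are small, both comma categories are small. Define
\[ (F^\ii X)(d) = \lim_{(c,\alpha) \in (F\downarrow d)^{\op}} X(c), \qquad (F^\uu X)(d) = \colim_{(c,\beta) \in (d \downarrow F)^{\op}} X(c). \]
These are limits and colimits of small diagrams, so they exist because $\cZ$ is complete and co-complete. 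A morphism $d \pil d'$ induces maps between the comma categories by pre- or post-composition. The universal property then gives the maps $(F^\ii X)(d') \pil (F^\ii X)(d)$ and $(F^\uu X)(d') \pil (F^\uu X)(d)$. Functoriality in $d$ and in $X$ follows from uniqueness in these universal properties.

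Next, construct the bijection $\Hom(F^*Y, X) \cong \Hom(Y, F^\ii X)$, natural in $Y : \cD^{\op} \pil \cZ$.
\begin{itemize}
\item Given $\eta : F^*Y \Rightarrow X$, define the component at $d$ as the map $Y(d) \pil \lim X(c)$ whose $(c,\alpha)$-component is $\eta_c \circ Y(\alpha) : Y(d) \pil Y(F c) \pil X(c)$. Naturality of $\eta$ makes these components a cone, and naturality in $d$ is a direct check.
\item Conversely, given $\theta : Y \Rightarrow F^\ii X$, take $\eta_c$ to be $\theta_{Fc}$ followed by the projection onto the component indexed by $(c, \id_{Fc})$.
\end{itemize}
To see that the two constructions are mutually inverse, use that the cone component at $(c,\alpha)$ equals the component at $(c,\id)$ precomposed with $Y(\alpha)$; this is exactly the cone condition for the morphism $(c,\id_{Fc}) \pil (c,\alpha)$ in $(F \downarrow d)$. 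This is the usual Yoneda-style argument.

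The left adjoint is handled dually. Given a natural transformation $X \Rightarrow F^*Y$, its components $X(c) \pil Y(Fc) \pil Y(d)$, with the second map given by $\beta : d \pil Fc$, form a cocone and so induce maps out of the colimit. Alternatively, one can apply the right-adjoint case to $\cZ^{\op}$, which is again complete and co-complete, with the variance adjusted accordingly.

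The main obstacle is keeping the variances straight: the presheaf contravariance, the choice between $(F \downarrow d)$ and $(d \downarrow F)$, and the extra $\op$ on the indexing categories. I would first verify all of these in the case $\cZ = \bfto$. There the limits are meets and the colimits are joins, and the formulas should recover $f^\ii(D,U) = (-, f(U)^{\uparrow})$ and $f^\uu(D,U) = (f(D)^{\downarrow}, -)$ from Section \ref{subsec:ds-adj}, which serves as a check on the indexing. Since this result is \cite[Cor.6.2.6]{Riehl}, it may also simply be cited; the sketch above is the standard argument behind it.
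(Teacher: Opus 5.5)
Your proposal is correct and is essentially the paper's route. The paper gives no argument beyond citing \cite[Cor.6.2.6]{Riehl}, and that result is exactly the pointwise Kan extension formulas you write down; your indexing by $(F\downarrow d)^{\op}$ and $(d\downarrow F)^{\op}$, and your check against $f^\uu, f^\ii$ for $\cZ=\bfto$, are right.

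One small correction concerns the mutual-inverse step. There you use the identity $\pi_{(c,\alpha)}\circ\theta_d=\pi_{(c,\id_{Fc})}\circ\theta_{Fc}\circ Y(\alpha)$ and attribute it to a cone condition inside $(F\downarrow d)$. But $(c,\id_{Fc})$ is an object of $(F\downarrow Fc)$, not of $(F\downarrow d)$. The identity actually follows from two facts: naturality of $\theta$ along $\alpha : Fc\pil d$, and the definition of $F^\ii X$ on morphisms, where post-composition with $\alpha$ sends $(c,\id_{Fc})$ to $(c,\alpha)$.
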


Note that $\cset$ is complete and co-complete. We are applying this
in the setting of enriched categories, \cite{Kelly} or \cite[Sec.4.4]{ACT},
where $\cset$ is replaced by the category $\bfto$.
A poset $P$ is then a $\bfto$-category,
since if $p,q \in P$, then $\Hom(p,q)$ is an object
in $\bfto$: This Hom-set is $1$ if $p \leq q$ and $0$ if we do not have
$p \leq q$. Generally, in an enriched category $\cC$, one
replaces
$\Set$ with a symmetric monoidal closed category $\cM$ such that for objects
$P,Q$ in $\cC$ the morphisms $\Hom(P,Q)$ is an object in $\cM$.

\section{Simplicial complexes and the five-sequence
  of adjoints}
\label{sec:sicx}

We develop the conceptual setting of (combinatorial) simplicial complexes
on a set, and we give the five-sequence of adjoints.

\subsection{Simplicial complexes}
\label{subsec:adj-term}
For a set $A$ let $P(A)$ be the power set of $A$, the set of
   all subsets of $A$. This is a (Boolean) lattice ordered by
   inclusion of subsets. A {\it (combinatorial) simplicial complex} on $A$ is a set $X$ 
   of subsets of $A$ such that if $F \in X$ and $G \sus F$, then $G \in X$.
   Such $X$ are precisely sets of subsets of $P(A)$ closed under taking smaller
   subsets, or a {\it down-set} of $P(A)$. For basics on combinatorial
   simplicial complexes, we refer to \cite{HeHi, MiSt, Stanley}.

 The power set $P(A)$ may be identified with maps (set functions)
 $\phi : A \pil \bfto$. A subset $D \sus A$ identifies with the function
 \[ \phi : A \pil \bfto, \quad \phi(a) = \begin{cases} 1, & a \in D \\
     0, & a \not \in D \end{cases}. \]
 Thus subsets $D \sus A$ corresponds to elements of
 \[ \hA = \Hom(A^{\op},\bfto) = \Hom(A, \bfto). \]
 Iterating the construction of Subsection \ref{subsec:ds-adj}, for
a poset $P$:
\[ \hhat{P} = \Hom((\hat{P})^\op,\bfto) = \Hom(\widehat{P^\op}, \bfto). \]
An element here is a cut $(\cD, \cU)$ where $\cD$ is a down-set
of $\hat{P}$ and $\cU$ is the complementary up-set.
A simplicial complex $X$ on $A$, corresponds to a down-set
in $\hA$. These correspond again precisely to order-preserving maps
($D$ is a subset of $A$)
\[ \Psi : (\hA)^{\op} \pil \bfto, \quad \phi(D) = \begin{cases} 1, & D \in X \\
 0, & D \not \in X \end{cases}. \]
Thus simplicial complexes correspond precisely to elements of 
$ \hhat{A} = \Hom((\hA)^{\op}, \bfto)$.

An element in $\hhA$ is a cut $(\cD, \cU)$, 
where $\cD$ is a down-set in the boolean
lattice $\hat{A}$ which identifies as the Boolean lattice $P(A)$
of subsets of $A$. Hence elements of $\hhA$ identifies as a
simplicial complex $X$.
Precisely this is as follows:
The elements of $\cD$ are cuts $(D,U)$. The elements of $X$ are
then the subsets $D$ of $A$. We shall henceforth
use $\hhA$ as representing simplicial complexes $\SCA$.
It will be convenient, especially in
Section \ref{sec:prod}, to have a full
arsenal of terminology:

\begin{itemize}
\item[1.] When $(D,U) \in \cD$, then $D$ is a {\it face} of $X$, and
  $U$ is a {\it face complement } of $X$. 
 \item[2.] When $(D,U) \in \cU$, then $D$ is a {\it non-face} of $X$
   and $U$ is a
   {\it non-face complement} of $X$.
\end{itemize}

The dimension of a face is $D$ is $|D|-1$. The dimension of $X$
is the maximal dimension of a face.

\subsection{The five-sequence of adjoints}
Now consider an order-preserving map $f : P \pil Q$ of posets. By Proposition
\ref{pro:adj-existence} we get left and right adjoints
\begin{equation} \label{eq:adj-ppp}
  f^{\uu}, f^\ii : \hat{P} \bihom{}{} \hat{Q} : f^*.
\end{equation}
 We may repeat this process and get further maps between
$\hhat{P}$ and $\hhat{Q}$. If $p,q \in \{ \uu, *, \iig\}$ denote
$(f^p)^q$ as $f^{pq}$. Each of the maps $f^p$ in \eqref{eq:adj-ppp}
gives three maps $f^{p\uu}, f^{p*}$ and $f^{p\ii}$. In all one gets
nine such maps, as $p = \uu, * ,\iig$, but not all of these are distinct.

\begin{proposition} Let $f : P \pil Q$ be order-preserving maps of posets.
  Then
  \[ \text{a. } \, f^{\uu *} = f^{* \uu}, \quad \text{b. } \,
    f^{\ii *} = f^{* \ii }, \quad \text{c. } \,
    f^{* * } = f^{\ii \uu} = f^{\uu \ii}. \]
  Thus we have three distinct functors
  \[ f^{\uu \uu}, f^{**}, f^{\ii \ii} : \hhat{P} \pil \hhat{Q}, \]
  and two distinct functors
  \[ f^{\uu *}, f^{\ii *} : \hhat{Q} \pil \hhat{P}. \]
\end{proposition}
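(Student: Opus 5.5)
The plan is to prove one general lemma about adjoint pairs of posets and then read off (a), (b) and (c) by applying it to the two adjunctions $f^\uu \adj f^*$ and $f^* \adj f^\ii$ from \eqref{eq:adj-ppp}. Throughout, $f^{pq}$ means $(f^p)^q$, and for a map $g$ the maps $g^*, g^\uu, g^\ii$ are formed as in Subsection \ref{subsec:ds-adj}: $g^*$ is inverse image of cuts, $g^\uu(\cD,-) = (g(\cD)^{\downarrow},-)$ and $g^\ii(-,\cU) = (-, g(\cU)^{\uparrow})$.

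\textbf{Lemma.} Let $X \bihom{g}{h} Y$ be an adjunction of posets with $g \adj h$, so $g: X \pil Y$ and $h: Y \pil X$. Then, as maps between the cut lattices,
\[ g^* = h^\uu : \hat{Y} \pil \hat{X}, \qquad h^* = g^\ii : \hat{X} \pil \hat{Y}. \]

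For the first identity, take a down-set $\cD$ of $Y$ and show $g^{-1}(\cD) = h(\cD)^{\downarrow}$.
\begin{itemize}
\item If $x \leq h(e)$ with $e \in \cD$, then adjointness gives $g(x) \leq e$. Hence $g(x) \in \cD$.
\item Conversely, if $g(x) \in \cD$, the unit gives $x \leq h(g(x))$. Hence $x \in h(\cD)^{\downarrow}$.
\end{itemize}

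For the second identity, take an up-set $\cU$ of $X$ and show $h^{-1}(\cU) = g(\cU)^{\uparrow}$.
\begin{itemize}
\item If $y \geq g(u)$ with $u \in \cU$, then $h(y) \geq h(g(u)) \geq u$. Hence $h(y) \in \cU$.
\item Conversely, if $h(y) \in \cU$, the counit gives $y \geq g(h(y))$. Hence $y \in g(\cU)^{\uparrow}$.
\end{itemize}
Since a cut is determined by either of its parts, this proves the lemma. Both identities are also instances of Lemma \ref{lem:poset-maxmin}.

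\textbf{Deducing the proposition.}
\begin{itemize}
\item Apply the lemma to $g = f^\uu$ and $h = f^*$, with $X = \hat P$ and $Y = \hat Q$. The first identity gives $(f^\uu)^* = (f^*)^\uu$, which is (a). The second gives $(f^*)^* = (f^\uu)^\ii$, i.e.\ $f^{**} = f^{\uu\ii}$.
\item Apply the lemma to $g = f^*$ and $h = f^\ii$, with $X = \hat Q$ and $Y = \hat P$. The second identity gives $(f^\ii)^* = (f^*)^\ii$, which is (b). The first gives $(f^*)^* = (f^\ii)^\uu$, i.e.\ $f^{**} = f^{\ii\uu}$.
\end{itemize}
Together these give (c). The nine maps $f^{pq}$ then collapse to $f^{\uu\uu}$, $f^{**}$ and $f^{\ii\ii}$ from $\hhat P$ to $\hhat Q$, and to $f^{\uu*}$ and $f^{\ii*}$ from $\hhat Q$ to $\hhat P$.

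\textbf{Main obstacle.} The only real difficulty is bookkeeping. One must track which lattice each map lives on, and make sure the variance of $(-)^*$ matches the direction of the adjunction, so that the identities come out as $f^{\uu*} = f^{*\uu}$ and $f^{\ii*} = f^{*\ii}$ and not the swapped versions. I would check this at the very start by writing down the domain and codomain of each of $f^{\uu*}$, $f^{*\uu}$, $f^{\ii*}$ and $f^{*\ii}$, before invoking the lemma.
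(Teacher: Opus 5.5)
Your proof is correct. It follows the same overall plan as the paper: from each of the two adjunctions $f^{\uu} \adj f^*$ and $f^* \adj f^{\ii}$ you read off two identities. Where you differ is in how the key step is justified, and your justification is more elementary.

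\textbf{The paper's route.} It argues abstractly. It cites the general fact that precomposition turns an adjunction $g \adj h$ into an adjunction $g^* \adj h^*$ \cite[Prop.4.4.6]{Riehl}. This gives $f^{\uu *} \adj f^{**}$ and $f^{**} \adj f^{\ii *}$. It then compares these with the defining adjunctions of $f^{*\uu}$, $f^{*\ii}$, $f^{\uu\ii}$ and $f^{\ii\uu}$, and uniqueness of adjoints forces the equalities.

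\textbf{Your route.} Your lemma $g^* = h^{\uu}$, $h^* = g^{\ii}$ is exactly those two steps (induced adjunction plus uniqueness) combined into one statement. You verify it directly from the unit $x \leq h(g(x))$ and the counit $g(h(y)) \leq y$. You also use the explicit formulas $h^{\uu}(\cD,-) = (h(\cD)^{\downarrow},-)$ and $g^{\ii}(-,\cU) = (-,g(\cU)^{\uparrow})$.

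\textbf{What each buys.} The paper's argument is shorter and never needs explicit formulas. It therefore works unchanged in the general Kan-extension setting of Proposition~\ref{pro:adj-existence}, where the equalities become natural isomorphisms. Your argument is self-contained and avoids both Kan extensions and uniqueness of adjoints. As a by-product it hands you the explicit descriptions of $f^{\uu *}$, $f^{**}$ and $f^{\ii *}$ that Section~\ref{sec:AB} works out separately.

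\textbf{A minor point.} You remark that the identities are ``instances'' of Lemma~\ref{lem:poset-maxmin}. That is only true after first treating principal down-sets (resp.\ up-sets) and then taking unions. Nothing in your proof relies on this remark.
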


\begin{proof}
  a. We have $f^\uu \adj f^*$. By \cite[Prop.4.4.6]{Riehl}
  $f^{\uu*} \adj f^{**}$. Since $f^{*\uu} \adj f^{**}$, by uniqueness
  of adjoints, \cite[Prop.4.4.1]{Riehl},
  we get $f^{*\uu} = f^{\uu *}$. Part b. is similar.

  c. We have $f^* \adj f^\ii$. Again by \cite[Prop.4.4.6]{Riehl}
  $f^{**} \adj f^{\ii *}$. Also $f^{\ii \uu} \adj f^{\ii *}$. Again by
  uniqueness of adjunctions $f^{\ii \uu} = f^{* * }$. Similarly
  $f^{\uu \ii} = f^{**}$.
\end{proof}

\begin{remark}
  This five-sequence of adjoints for posets is well-known to category
  theorists. It is described explicitly in the book \cite[Section 6]{Si}.
  But we are unaware of any detailed study of this five-sequence, at
  least from the perspective of simplicial complexes, as we now undertake.
\end{remark}

\begin{corollary} For a map $f : A \pil B$ of sets we get
a five-sequence of adjoints
\[ f^{\uu \uu} \adj f^{* \uu} \adj f^{* *} \adj f^{* \ii} \adj f^{\ii \ii}, \]
which are functors between
simplicial complexes on $A$ and $B$:
\begin{eqnarray} \label{eq:adj-AB}
  f^{\uu \uu}, f^{**}, f^{\ii \ii} & : & \hhat{A} \pil \hhat{B}\\
\label{eq:adj-BA}  f^{\uu *}, f^{\ii *} & : & \hhat{B} \pil \hhat{A}.
\end{eqnarray}
\end{corollary}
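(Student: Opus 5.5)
We specialize the preceding Proposition to the case where the poset is a set. Regard $A$ and $B$ as discrete posets, so that $A^{\op} = A$. Any set map $f : A \pil B$ is then trivially order-preserving. Subsection \ref{subsec:ds-adj} (or Proposition \ref{pro:adj-existence}) gives the first-level adjunction
\[ f^\uu \adj f^* \adj f^\ii, \quad f^\uu, f^\ii : \hA \pil \hB, \quad f^* : \hB \pil \hA, \]
where $f^*(C) = f^{-1}(C)$, $f^\uu(D) = f(D)$ and $f^\ii(D) = \core_f(D)$. These are order-preserving maps between the Boolean lattices $\hA = P(A)$ and $\hB = P(B)$. We therefore apply the construction once more, now to the posets $P = \hA$ and $Q = \hB$.

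Applying it to $f^* : \hB \pil \hA$ gives $f^{*\uu} \adj f^{**} \adj f^{*\ii}$, with $f^{*\uu}, f^{*\ii} : \hhB \pil \hhA$ and $f^{**} : \hhA \pil \hhB$. This is already three consecutive terms of the desired chain. To extend it at both ends, we use the standard fact that precomposition reverses adjunctions: if $g \adj h$ between posets, then $g^* \adj h^*$ on down-set lattices, where now $h^*$ is the left adjoint (this is \cite[Prop.4.4.6]{Riehl}, as used in the preceding proof).

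For the left end, apply this to $f^\uu \adj f^*$, both as maps between $\hA$ and $\hB$. This gives $f^{\uu\uu} \adj f^{\uu *}$, since $f^{\uu\uu}$ is by construction the left adjoint of $f^{\uu *}$. Part a of the Proposition says $f^{\uu *} = f^{*\uu}$, so we get $f^{\uu\uu} \adj f^{*\uu}$.

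For the right end, apply the same reasoning to $f^* \adj f^\ii$. This gives $f^{\ii *} \adj f^{\ii\ii}$. Part b says $f^{\ii *} = f^{*\ii}$, so we get $f^{*\ii} \adj f^{\ii\ii}$.

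Concatenating the three pieces yields
\[ f^{\uu \uu} \adj f^{* \uu} \adj f^{* *} \adj f^{* \ii} \adj f^{\ii \ii}. \]
The domains and codomains are as claimed: $f^{\uu\uu}, f^{**}, f^{\ii\ii}$ go from $\hhA$ to $\hhB$, and $f^{\uu *}, f^{\ii *}$ go from $\hhB$ to $\hhA$. Finally, by the identification in Subsection \ref{subsec:adj-term}, $\hhA = \SCA$ and $\hhB = \SCB$, so these are functors between simplicial complexes.

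The only point needing care is the direction of the adjunctions in the middle step. The maps $f^{**}$, $f^{\uu *}$, $f^{\ii *}$ are precompositions with maps between the lattices $\hA, \hB$. Precomposition turns a left adjoint into a right adjoint, so one must check that the indices line up exactly as above. The rest is immediate from the Proposition.
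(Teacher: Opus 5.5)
Your proof is correct and follows the paper's intended derivation from the preceding Proposition. The adjunctions $f^{\uu\uu}\adj f^{\uu *}$, $f^{*\uu}\adj f^{**}\adj f^{*\ii}$ and $f^{\ii *}\adj f^{\ii\ii}$ hold by construction, and parts a and b glue them into the five-sequence.

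The only blemish is your precomposition remark, which is self-contradictory and not needed. In the down-set setting, $g\adj h$ gives $g^*\adj h^*$ with $g^*$ as the left adjoint; this is how the paper gets $f^{\uu *}\adj f^{**}$ from $f^\uu\adj f^*$. So that remark does not produce $f^{\uu\uu}\adj f^{\uu *}$, which instead holds simply by the definition of $f^{\uu\uu}$, as you also note.
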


Our main objective is to describe and investigate these five functors.

\section{General statement and proofs}
\label{sec:AB}

We give the explicit description of the functors \eqref{eq:adj-AB}
and \eqref{eq:adj-BA}. We first note the immediate:

\begin{lemma} \label{lem:AB-X} Given a function $f : A \pil B$. 
  Let $X$ be a simplicial complex on $A$, and fix $C \sus B$.
  \begin{itemize}
       \item[a.] Every $D \sus A$ with $f(D) = C$ is a non-face of $X$ if
and only if no face $D$ of $X$ has $C \sus f(D)$.
    \item[b.] Every $D \sus A$ with $C = \core_f(D)$ is a face of $X$ if
and only if no non-face $D$ of $X$ has $C \geq \core_f(D)$.
    \end{itemize}
  \end{lemma}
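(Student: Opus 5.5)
Both parts are proved the same way. One implication is trivial. For the other, I replace a witnessing (non-)face by a modified set whose image, or whose core, is exactly $C$, using that faces of $X$ form a down-set and non-faces an up-set in the Boolean lattice $\hA$.

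For part a., the direction ``$\Leftarrow$'' is immediate. If some face $D$ had $f(D)=C$, then in particular $C \sus f(D)$, contradicting the hypothesis. For ``$\Rightarrow$'' I argue by contraposition. Suppose some face $D$ of $X$ has $C \sus f(D)$, and put $D' = D \cap f^{-1}(C)$. Then $D' \sus D$, so $D'$ is a face, since $X$ is closed under subsets. Moreover $f(D') = f(D) \cap C = C$, because every $c \in C$ has a preimage in $D$, and that preimage lies in $f^{-1}(C)$. So $D'$ is a face with $f(D') = C$, which is what is needed. If $C \not\sus f(A)$, both sides hold vacuously, and the argument is consistent with this.

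For part b., again ``$\Leftarrow$'' is immediate. A non-face $D$ with $\core_f(D) = C$ would in particular satisfy $C \supseteq \core_f(D)$. For ``$\Rightarrow$'', suppose $D$ is a non-face with $\core_f(D) \sus C$, and put $D' = D \cup f^{-1}(C)$. Since non-faces form an up-set, $D'$ is a non-face.

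It remains to show $\core_f(D') = C$. First, $C \sus \core_f(D')$, since $f^{-1}(b) \sus D'$ for every $b \in C$. Conversely, let $b \notin C$. The fiber $f^{-1}(b)$ is disjoint from $f^{-1}(C)$, so $f^{-1}(b) \sus D'$ holds iff $f^{-1}(b) \sus D$. That means $b \in \core_f(D) \sus C$, a contradiction. Hence $\core_f(D') = C$, and $D'$ is a non-face with core exactly $C$.

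The only step requiring any care is this core computation in b. It hinges on fibers over distinct points of $B$ being disjoint, and it also covers points $b \notin f(A)$, whose empty fibers lie in every core. Everything else is routine.
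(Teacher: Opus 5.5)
Your proof is correct and is essentially the paper's argument: the paper calls (a) obvious and proves (b) by negating both sides, which reduces it to ``some non-face $D$ has $\core_f(D) = C$ iff some non-face $D$ has $\core_f(D) \sus C$''. Your explicit witnesses, $D \cap f^{-1}(C)$ in (a) and $D \cup f^{-1}(C)$ in (b) with the fiber-disjointness computation of the core, supply the details the paper leaves as clear.
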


  \begin{proof}
    Part a is obvious. As for part b, taking negations on each side
    the statement is that some non-face $D$ has $\core_f(D) = C$ iff
    some non-face $D$ has $C \geq \core_f(D)$, which is clear.
  \end{proof}

  \begin{lemma} \label{lem:AB-Y}
     Given a function $f : A \pil B$. 
  Let $Y$ be a simplicial complex on $B$, and let $D \sus A$.
  \begin{itemize}
  \item[a.] $f(D)$ is a face of $Y$ iff $D \sus f^{-1}(C)$
    for some face $C$ of $Y$.
  \item[b.] $\core_f(D)$ is a non-face of $Y$ iff $D \supseteq f^{-1}(C)$
    for some non-face $C$ of $Y$.
  \end{itemize}
  \end{lemma}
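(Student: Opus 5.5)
The plan is to reduce both parts to the elementary adjunction-type facts about images and cores recorded in Subsection \ref{subsec:ds-adj}: for $D \sus A$ and $C \sus B$ we have $f(D) \sus C$ iff $D \sus f^{-1}(C)$, and $f^{-1}(C) \sus D$ iff $C \sus \core_f(D)$. The second equivalence is just the defining property of the $f$-core, since $\core_f(D)$ is the largest subset of $B$ whose preimage lies in $D$. The remaining ingredient is that $Y$ is a down-set in $P(B)$: faces are closed under taking subsets, and hence non-faces are closed under taking supersets.

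For part a, I argue the two implications separately. If $f(D)$ is a face of $Y$, take $C = f(D)$; then $D \sus f^{-1}(f(D)) = f^{-1}(C)$. Conversely, suppose $D \sus f^{-1}(C)$ for some face $C$ of $Y$. The first equivalence gives $f(D) \sus C$. Since $Y$ is closed under subsets, $f(D)$ is a face of $Y$.

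Part b is the dual argument, with non-faces in place of faces. Suppose $\core_f(D)$ is a non-face of $Y$. Take $C = \core_f(D)$; by definition of the core, $f^{-1}(C) \sus D$, which is the required inclusion. Conversely, suppose $f^{-1}(C) \sus D$ for some non-face $C$ of $Y$. Maximality of the core gives $C \sus \core_f(D)$. Since non-faces form an up-set, $\core_f(D)$ is a non-face of $Y$.

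There is no real obstacle here. The only point needing care is in part b: one must use the maximality property of $\core_f$ in the correct direction. The relevant inclusion $f^{-1}(\core_f(D)) \sus D$ holds because $\core_f(D)$ consists exactly of the $b$ whose full fiber $f^{-1}(b)$ lies in $D$; this remains true for $b \in B \backslash f(A)$, whose fiber is empty.
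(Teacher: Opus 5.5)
Your proof is correct and matches the paper's approach: the paper simply declares both statements immediate, and your argument is exactly that routine verification. It combines the Galois-connection facts $f(D)\subseteq C \iff D\subseteq f^{-1}(C)$ and $f^{-1}(C)\subseteq D \iff C\subseteq \core_f(D)$ with the fact that faces of $Y$ form a down-set and non-faces an up-set.
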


  \begin{proof}
    Both statements are immediate.
  \end{proof}

\begin{theorem} \label{thm:AB-fp*}
  Given a function $f : A \pil B$. 
  \begin{itemize}
  \item[a.]
    The map $f^{\uu *} : \hhat{B} \pil \hhat{A}$ sends $Y$ in $\hhat{B}$
    to the simplicial complex $X$ on $A$ whose faces are all $T \sus A$
    such that $f(T)$ is a face of $Y$. This is the largest $X$
    such that $f^{\uu \uu}(X) \sus Y$ and the smallest $X$ such
    that $f^{**}(X) \supseteq Y$.
   \item[b.] The map $f^{\ii *} : \hhat{B} \pil \hhat{A}$ sends $Y$
     in $\hhat{B}$ to the simplicial complex $X$ on $A$ whose
     faces are all $T \sus A$ such that $\core_f(T)$ is a face of $Y$.
     This is the largest $X$
    such that $f^{**}(X) \sus Y$ and the smallest $X$ such
    that $f^{\ii \ii}(X) \supseteq Y$.
   \end{itemize}
 \end{theorem}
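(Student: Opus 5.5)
The plan is to compute $f^{\uu *}$ and $f^{\ii *}$ directly from the general recipe of Subsection~\ref{subsec:ds-adj}, applied to the order-preserving maps $f^\uu, f^\ii : \hA \pil \hB$. The extremal characterizations will then follow formally from the adjunctions
\[ f^{\uu \uu} \adj f^{* \uu} \adj f^{* *} \adj f^{* \ii} \adj f^{\ii \ii} \]
together with Lemma~\ref{lem:poset-maxmin}.

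For part a, recall that for an order-preserving $g : P \pil Q$ the map $g^* : \hQ \pil \hP$ sends a down-set $\cD$ of $Q$ to $g^{-1}(\cD)$. Take $g = f^\uu : \hA \pil \hB$, which sends a subset $T \sus A$ (the cut $(T, A\setminus T)$) to $f(T)$; this is the description $f^\uu(D,U) = (f(D), \core_f(U))$. A simplicial complex $Y$ on $B$ is a down-set of $\hB$. Hence $(f^\uu)^*(Y)$ is the set of $T \sus A$ with $f(T) \in Y$, which is exactly the description claimed. As a sanity check, this set is indeed a down-set: if $T' \sus T$ then $f(T') \sus f(T)$.

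For part b, the same computation applies with $g = f^\ii$. Here $f^\ii(D,U) = (\core_f(D), f(U))$, so $T \mapsto \core_f(T)$. Thus $(f^\ii)^*(Y)$ consists of those $T$ with $\core_f(T) \in Y$. Again this is a down-set, since $\core_f$ is monotone.

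The extremal statements then come from Lemma~\ref{lem:poset-maxmin} applied to suitable adjoint pairs.
\begin{itemize}
\item For part a, use $f^{\uu\uu} \adj f^{\uu *}$, where $f^{\uu *}$ is the right adjoint of $f^{\uu\uu}$ by construction. Lemma~\ref{lem:poset-maxmin}a, with $F = f^{\uu\uu}$ and $G = f^{\uu *}$, shows that $f^{\uu *}(Y)$ is the largest $X$ with $f^{\uu\uu}(X) \sus Y$.
\item Still for part a, use $f^{\uu *} = f^{*\uu} \adj f^{**}$, which comes from the Proposition identifying $f^{\uu *} = f^{* \uu}$. Lemma~\ref{lem:poset-maxmin}b, with $F = f^{*\uu}$ and $G = f^{**}$, gives that $f^{*\uu}(Y)$ is the smallest $X$ with $f^{**}(X) \supseteq Y$.
\item For part b, use $f^{**} \adj f^{*\ii} = f^{\ii *}$, so $f^{\ii*}(Y)$ is the largest $X$ with $f^{**}(X) \sus Y$.
\item Also for part b, use $f^{\ii *} \adj f^{\ii\ii}$, which is the adjunction $g^* \adj g^\ii$ for $g = f^\ii$. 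This gives that $f^{\ii*}(Y)$ is the smallest $X$ with $f^{\ii\ii}(X) \supseteq Y$.
\end{itemize}

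The only genuinely delicate point is bookkeeping. One must check that the element of $\hA$ represented by a subset $T$ is mapped by $f^\uu$ (respectively $f^\ii$) to the element of $\hB$ represented by $f(T)$ (respectively $\core_f(T)$). This is a matter of tracking which component of the cut $(D,U)$ carries the subset. One must also keep track of the direction of each adjunction, so that ``largest'' and ``smallest'' are correctly assigned in Lemma~\ref{lem:poset-maxmin}. I would verify the first point once, using the explicit formulas for $f^\uu$ and $f^\ii$ recorded in Subsection~\ref{subsec:ds-adj}. Everything else is formal.
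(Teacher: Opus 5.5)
Your proof is correct, but for the explicit description it takes a different route from the paper.

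\textbf{Your route.} You compute $f^{\uu *}$ and $f^{\ii *}$ literally as $(f^\uu)^*$ and $(f^\ii)^*$. That is, you take preimages of the down-set $Y \sus \hB$ under the maps $T \mapsto f(T)$ and $T \mapsto \core_f(T)$. This gives the membership criteria at once, and the down-set property is obvious from monotonicity.

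\textbf{The paper's route.} The paper computes the other side of the identities $f^{\uu *} = f^{*\uu}$ and $f^{\ii *} = f^{*\ii}$, namely the extensions along $f^* : \hB \pil \hA$:
\begin{itemize}
\item $f^{*\uu}(Y)$ is the down-set of $\hA$ generated by the $f^{-1}(C)$ with $C \in Y$;
\item the non-faces of $f^{*\ii}(Y)$ form the up-set generated by the $f^{-1}(C)$ with $C$ a non-face.
\end{itemize}
It then needs Lemma \ref{lem:AB-Y} to translate. ``Contained in some $f^{-1}(C)$ with $C \in Y$'' becomes ``$f(D) \in Y$'', and ``contains some $f^{-1}(C)$ with $C \notin Y$'' becomes ``$\core_f(D) \notin Y$''.

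\textbf{Where the identification $f^{\uu*} = f^{*\uu}$ enters.} The paper uses it already for the description, to know that what it computed is the functor named in the statement. You use it only for the extremal properties involving $f^{**}$, through $f^{*\uu} \adj f^{**} \adj f^{*\ii}$. Your route bypasses Lemma \ref{lem:AB-Y} entirely.

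\textbf{Extremal statements.} These are handled the same way in both, by Lemma \ref{lem:poset-maxmin}. Your assignment of ``largest'' and ``smallest'' to the four adjunctions is correct.

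\textbf{Avoiding uniqueness of adjoints.} Your explicit formulas would also let you check the two adjunctions with $f^{**}$ directly, without the uniqueness-of-adjoints argument behind $f^{\uu *} = f^{*\uu}$:
\begin{itemize}
\item $f^{\uu *}(Y) \sus X \iff Y \sus f^{**}(X)$ follows from $f(f^{-1}(C)) \sus C$ and $T \sus f^{-1}(f(T))$;
\item $f^{**}(X) \sus Y \iff X \sus f^{\ii *}(Y)$ follows from $f^{-1}(\core_f(T)) \sus T$ and $C \sus \core_f(f^{-1}(C))$.
\end{itemize}
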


  \begin{proof}
Let $X$ be given by $(\cD,\cU)$ in $\hhat{A}$, and
$Y$ be given by $(\cC,\cV)$ in $\hhat{B}$.

 a. Consider $f^* : \hat{B} \pil \hat{A}$. Then $\cD = f^{*\uu}(\cC)$ consists of
    those $(D,U)$ in  $\hat{A}$ such that for some $(C,V) \in \cC$ then
    $f^*(C) \supseteq D$, i.e. $f^{-1}(C) \supseteq D$.
    By Lemma \ref{lem:AB-Y}a,
    $X$ consists of those $D$ such that $f(D)$ is a face of $Y$.

\medskip
b. Consider $f^* : \hat{B} \pil \hat{A}$. Then $\cU = f^{*\ii}(\cV)$ consists of
    those $(D,U)$ in  $\hat{A}$ such that for some $(C,V) \in \cV$ then
    $f^*(C) \sus D$, i.e. $f^{-1}(C) \sus D$ for some non-face  $C$.
    
    By Lemma \ref{lem:AB-Y}b, the non-faces of  $X$ then consists of those $D$ such that
    $\core_f(D)$  is a non-face of $Y$. The faces of $X$ are then those
    $D$ such that $\core_f(D)$ is a face of $Y$. 

    The statement about smallest and largest follows by
    Lemma \ref{lem:poset-maxmin}.
  \end{proof}

  \begin{theorem} \label{thm:AB-fpp}
    Given a function $f : A \pil B$. 
    \begin{itemize}
    \item[a.] $f^{\uu \uu} : \hhat{A} \pil \hhat{B}$ sends $X$ on $A$
      to $Y$ on $B$ generated by the $f(D)$ where $D \in X$. It is
      the smallest $Y$ in $\hhat{B}$ such that $f^{* \uu}(Y) \supseteq X$.
     
    \item[b.] $f^{**} : \hhat{A} \pil \hhat{B}$ sends $X$ to $Y$ consisting
      of $C \sus B$ such that $f^{-1}(C) \in X$. Alternatively
      its faces are the $\core_f(D)$ of faces $D$ of $X$. It is the
      largest $Y$ on $B$ such that $f^{*\uu}(Y) \sus X$, and
      the smallest $Y$ such that $f^{* \ii}(Y) \supseteq X$. 

    \item[c.] $f^{\ii \ii} : \hhat{A} \pil \hhat{B}$ sends $X$
      to $Y$ consisting of all $C \sus B$ such that every $D \sus A$
      with $\core_f(D) = C$, is in $X$. It is the largest $Y$ on $B$
      such that $f^{* \ii}(Y) \sus X$. 
      \end{itemize}
    \end{theorem}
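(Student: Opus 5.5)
We unwind the definitions in $\hhA$ exactly as in the proof of Theorem \ref{thm:AB-fp*}. Write $X$ as the cut $(\cD,\cU)$ in $\hhA$ and $Y$ as $(\cC,\cV)$ in $\hhat{B}$. The maps $f^{\uu\uu}$ and $f^{\ii\ii}$ are obtained by applying the constructions $g^\uu(\cD,-) = (g(\cD)^\downarrow,-)$ and $g^\ii(-,\cU) = (-,g(\cU)^\uparrow)$ of Subsection \ref{subsec:ds-adj} to the maps $g = f^\uu$ and $g = f^\ii$ from $\hA$ to $\hB$. The map $f^{**}$ is obtained by pulling back along $f^* : \hB \pil \hA$. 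The extremal characterizations then follow from Lemma \ref{lem:poset-maxmin}, using the adjunctions $f^{\uu\uu} \adj f^{*\uu} \adj f^{**} \adj f^{*\ii} \adj f^{\ii\ii}$.

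For part a, we use $f^\uu(D,U) = (f(D),\core_f(U))$. Then $f^{\uu\uu}(\cD)$ is the down-set in $\hB$ generated by the cuts $(f(D),-)$ with $D \in X$. On first components, this is the simplicial complex generated by the $f(D)$ with $D$ a face of $X$. Since $f^{\uu\uu} \adj f^{*\uu}$, Lemma \ref{lem:poset-maxmin}b gives that $f^{\uu\uu}(X)$ is the smallest $Y$ with $f^{*\uu}(Y) \supseteq X$.

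For part b, $f^{**}(\cD)$ is the preimage $(f^*)^{-1}(\cD)$, namely all $(C,V)$ with $f^*(C,V) = (f^{-1}(C),f^{-1}(V)) \in \cD$. Thus $C$ is a face exactly when $f^{-1}(C) \in X$. For the alternative description, we check both inclusions.
\begin{itemize}
\item[(i)] If $f^{-1}(C) \in X$, then $C \sus \core_f(f^{-1}(C))$. Since $f^{**}(X)$ is a down-set, it suffices to observe that $\core_f(f^{-1}(C))$ is itself a core of the face $f^{-1}(C)$.
\item[(ii)] If $D \in X$, then $f^{-1}(\core_f(D)) \sus D$, so $f^{-1}(\core_f(D)) \in X$.
\end{itemize}
Hence the faces are exactly the cores of faces of $X$. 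This relies on the elements of $B \backslash f(A)$ lying in every core, which is harmless since their preimages are empty. The two extremal properties follow from Lemma \ref{lem:poset-maxmin}a applied to $f^{*\uu} \adj f^{**}$, and Lemma \ref{lem:poset-maxmin}b applied to $f^{**} \adj f^{*\ii}$.

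For part c, we use $f^\ii(D,U) = (\core_f(D), f(U))$. Then $f^{\ii\ii}(\cU)$ is the up-set generated by the images $f^\ii(D,U)$ of the non-faces $D$ of $X$. Its first components are the sets $C' \supseteq \core_f(D)$ for some non-face $D$. So $C$ is a non-face of $Y$ iff $C \supseteq \core_f(D)$ for some non-face $D$ of $X$. By Lemma \ref{lem:AB-X}b, negation gives that $C$ is a face of $Y$ iff every $D$ with $\core_f(D)=C$ is a face of $X$. The extremal property follows from Lemma \ref{lem:poset-maxmin}a applied to $f^{*\ii} \adj f^{\ii\ii}$.

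The main obstacle is bookkeeping: tracking which component of the cut (down-part or up-part) each adjoint acts on, and translating the order on $\hB$ into inclusion of the first components. In part c, the key point is that the generated up-set corresponds to supersets of the cores. Part b's alternative description is the only step needing a small genuine argument, namely (i) and (ii) above.
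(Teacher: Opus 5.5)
Your proof follows the paper's own route. You unwind the cuts with $f^\uu(D,U)=(f(D),\core_f(U))$ and $f^\ii(D,U)=(\core_f(D),f(U))$, pull back along $f^*$ for $f^{**}$, use Lemma~\ref{lem:AB-X}b in part c, and read off the extremal properties from Lemma~\ref{lem:poset-maxmin} applied to the five adjunctions. You pair each part of that lemma with the right adjunction; the paper leaves those steps implicit, and it does not argue the alternative description in part b at all.

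The one step to fix is your conclusion in part b, ``hence the faces are exactly the cores of faces of $X$''. Your (i) only shows that each face $C$ lies \emph{inside} the core $\core_f(f^{-1}(C)) = C \cup (B\setminus f(A))$. Literal equality is in fact false when $f$ is not surjective. Every core contains $B\setminus f(A)$, yet $\emptyset \in f^{**}(X)$ whenever $X\neq\emptyset$. For example, for an inclusion $A\subsetneq B$ the complex $f^{**}(X)=\cone_{B\setminus A}(X)$ has many faces not containing $B\setminus A$. Your remark that this is ``harmless'' does not rescue equality.

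What (i) and (ii) actually prove is that $f^{**}(X)$ is the complex \emph{generated} by the cores $\core_f(D)$ with $D\in X$. Equivalently, $C$ is a face if and only if $C\cup(B\setminus f(A))$ is such a core. This is the correct reading of the statement, and it matches the ``generated by'' phrasing of part a. State the conclusion this way. Note also that the down-set property of $f^{**}(X)$ is what you need in (ii), to pass from ``cores are faces'' to ``everything they generate is a face''. For surjective $f$ one has $\core_f(f^{-1}(C))=C$, and your argument then gives equality exactly.
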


   \begin{proof} Again $X$ is given by $(\cD,\cU)$ in $\hhat{A}$, and
$Y$ by $(\cC,\cV)$ in $\hhat{B}$.

a. Consider $f^\uu : \hat{A} \pil \hat{B}$.
      Then $Y$ consists of those  $(C,-)$ in $\cC$ such that $f(D)
      \supseteq C$ for some $(D,-) \in \cD$.
      
      b. Consider $f^* : \hat{B} \pil \hat{A}$. Then 
      $(C,-) \in \cC$ if $(f^*(C),-) $ is in $\cD$. These
      are the $C \sus B$ such that $f^{-1}(C) \in X$.
    
      c. Consider $f^\ii : \hat{A} \pil \hat{B}$. Then $\cV$ consists
      of those $(C,V)$ such that $(C,V) \geq f^{\ii}(D,U) = (\core_f(D),f(U))$
      for some $(D,U) \in \cU$. Equivalently $C \geq \core_f(D)$ for some
      $(D,-)$ in $\cU$. 

Then $\cC$ consists of the $(C,V)$ such that $C \geq \core_f(D)$ for 
no non-face $D$. Lemma \ref{lem:AB-X}b gives that $Y$ consists
of those $C$ such that every $D$ with $\core_f(D) = C$ is a
face of $X$.    
\end{proof}

\section{Functoriality of Stanley-Reisner correspondence}
\label{sec:functSR}

To a combinatorial simplicial complex $X$ there is an associated ring,
its Stanley-Reisner ring. This is a central object in combinatorial
commutative algebra.
 In mathematics, natural and
 fruitful correspondences between geometry and algebra are normally functorial.
 The Stanley-Reisner correspondence is also functorial. But, perhaps due
 to a certain unnaturality (the morphisms on algebras do not respect
 multigradings), there has hardly been attention on this.

With the greater arsenal of maps
   between simplicial complexes we now have,
   it is possible to define three functorial correspondence between
   simplicial complexes and rings, giving dualities. And in fact two of the three do
   respect multigradings.
   
\subsection{Stanley-Reisner ring}
 For a finite set $A$ let $k[x_A]$ be the polynomial ring
whose variables are $x_a$ for $a \in A$. If $E \sus A$ write
$x^\mult_E$ for the monomial $\prod_{e \in E} x_e$ (called a square-free monomial
since it is a product of distinct variables).

When $X$ is a simplicial complex on the finite set $A$,
the {\it Stanley-Reisner ideal}
$I_X$ of $X$ is the ideal in $k[x_A]$ generated by monomials $x^\mult_E$
where $E$ is {\it not} in $X$. The {\it Stanley-Reisner} ring  is
$k[X] := k[x_A]/I_X$.
A squarefree monomial $x^\mult_E$ is then non-zero in $k[x_A]/I_X$ iff
$E \in X$. Note that the ring $k[X]$ will be $\hele^A$-graded.
   
\subsection{Categories of simplicial complexes}
Simplicial complexes can be made into a category. Given simplicial
complexes $X$ on $A$ and $Y$ on $B$ ($A,B$ not necessarily finite),
the traditional notion for a
morphism from $X$ to $Y$ is a function $f : A \pil B$ such that
for each face $F$ of $X$, the image $f(F)$ is a face of $Y$.
With our notation here, it says that a morphism from
$X$ to $Y$ is a function $f: A \pil B$ such $f^{\uu \uu}(X) \sus Y$.
Denote this category as $\SCo$.

We shall give two other ways to make simplicial complexes into a category.
\ignore{Note that a functorial
correspondence $F$ between geometry and algebra is normally contravariant:
A morphism between geometric objects $X \pil Y$ gives a morphism
$FY \pil FX$ in the opposite direction. This is also reasonable
to expect here: If $X \sus X^\prime$ is an inclusion of simplicial
complexes, defined on a set $A$, there is a surjection of
Stanley-Reisner rings $\kr[x_A]/I_{X^\prime} \pil \kr[x_A]/I_X$,
a surjection which is a $k$-algebra homomorphism.
}

\begin{proposition} \label{pro:funct-nf}
  Let $f : A \pil B$ be a set function, and $X$ and $Y$ simplicial
  complexes on respectively $A$ and $B$.
  \begin{itemize}
  \item[a.] If $Y = f^{**}(X)$, the map $f$ (or really $f^\uu$) sends
        non-faces of $X$ to non-faces of $Y$.
\item[b.] If $X = f^{* \ii}(Y)$, then $f^{-1}$ (or really $f^*$) sends
  non-faces of $Y$ to non-faces of $X$.
\end{itemize}
\end{proposition}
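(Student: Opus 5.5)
Both parts come down to unwinding the explicit descriptions in Theorems \ref{thm:AB-fpp}b and \ref{thm:AB-fp*}b, and then using that faces of a simplicial complex are closed downwards and non-faces are closed upwards. I argue each part by contradiction or directly, working with subsets of $A$ and $B$ rather than with the cut formalism.

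For part a, take $Y = f^{**}(X)$. By Theorem \ref{thm:AB-fpp}b, the faces of $Y$ are exactly the $C \sus B$ with $f^{-1}(C) \in X$. Let $D$ be a non-face of $X$ and suppose, for contradiction, that $f(D)$ is a face of $Y$. Then $f^{-1}(f(D)) \in X$. Since $D \sus f^{-1}(f(D))$ and $X$ is closed under taking subsets, $D \in X$, which is a contradiction. Hence $f(D)$ is a non-face of $Y$.

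For part b, take $X = f^{*\ii}(Y)$. By the Proposition on the nine composites, $f^{*\ii} = f^{\ii *}$, so Theorem \ref{thm:AB-fp*}b applies: the faces of $X$ are exactly the $T \sus A$ with $\core_f(T) \in Y$. Let $C$ be a non-face of $Y$. Then $\core_f(f^{-1}(C))$ consists of all $b$ whose full fiber lies in $f^{-1}(C)$, so it contains $C$. (It also contains $B \backslash f(A)$, which is why we get an inclusion rather than equality.) Non-faces of $Y$ form an up-set, so $\core_f(f^{-1}(C))$ is a non-face of $Y$. Therefore $f^{-1}(C)$ is a non-face of $X$.

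The only point needing care is that $\core_f(f^{-1}(C))$ may be strictly larger than $C$; the up-closure of non-faces handles this. Alternatively, part b follows directly from Lemma \ref{lem:AB-Y}b with $D = f^{-1}(C)$.
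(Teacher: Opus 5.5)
Your proof is correct and follows essentially the paper's argument. In (a) you use the preimage description of $f^{**}(X)$ where the paper uses the equivalent ``faces are cores of faces of $X$'' description, and both reach the same contradiction. In (b) the paper reads the claim directly off the cut formula $f^{*\ii}(\cC,\cV) = (-, f^*(\cV)^{\uparrow})$. You instead pass through the equivalent core description of Theorem~\ref{thm:AB-fp*}b, together with $\core_f(f^{-1}(C)) \supseteq C$ and the up-closure of non-faces, which is a harmless detour.
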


\begin{proof}
   For part a. let $N$ be a non-face of $X$. If  $f(N)$ was a face of $Y$,
   it would have to be a core of some face $F$ of $X$.
But this contradicts that $N$ is a non-face.

For part b. let $Y$ correspond to a cut $(\cC,\cV)$ of $\hat{B}$.
Then $f^{* \ii}(Y)$ is $(-, f^{* \uparrow}(\cV))$, and so non-faces of $Y$,
(elements of $\cV$) map to non-faces of $X$.
 \end{proof}

 In addition to $\SCo$,
we now define two other categories of simplicial complexes.

\begin{definition}  The categories $\SCu$ and $\SCt$ have
 simplicial complexes on finite sets as
  objects. Let  $X$ be a simplicial complex on the set $A$, and $Y$
  a simplicial complex on the set $B$.

  \medskip
  A morphism $F : X \pil Y$ in $\SCu$
  is a set map $f : A \pil B$ such that $f^{**}(X) \sus Y$, or equivalently
  (due to the adjunction $f^{**} \adj f^{* \ii}$) $X \sus f^{* \ii}(Y)$.

  \medskip
 A morphism $G : X \pil Y$ in $\SCt$
  is a set map $g : B \pil A$ such that $X \sus g^{**}(Y)$, or equivalently
  (due to the adjunction $g^{*\uu} \adj g^{**}$) $g^{*\uu}(X) \sus Y$.
\end{definition}

More explicitly a morphism $F : X \pil Y$ in $\SCu$ is a map $f : A \pil B$
such that:
\begin{itemize}
  \item If $C \sus B$ and  $f^{-1}(C)$ is a face of $X$,
  then $C$ is a face of $Y$.
\end{itemize}
If we think of $f : A \pil B$ as giving
a partition of the vertices of $A$ into color classes $A_b$, one for each
color $b \in B$, then
such a partition gives a morphism if whenever a union of color classes $A_{b_1} \cup \cdots \cup A_{b_m}$ is a face of $X$,
then $\{b_1, \ldots, b_m\}$ is a face of $Y$.

\medskip
On the other hand a morphism $G : X \pil Y$ in $\SCt$ given by $g : B \pil A$
is such that:
\begin{itemize}
\item If $D \sus A$ and $A$ is a face of $X$, then $g^{-1}(A)$ is
  a face of $Y$.
  \end{itemize}
We can think of $g$ as ``blowing up'' each vertex $a$ to a set $B_a$.
It gives a morphism if whenever $\{a_1, \ldots, a_n\}$ is a face of $X$,
then the ``blow up'' $B_{a_1} \cup \cdots \cup B_{a_n}$ is a face of $Y$. 

\begin{example}
  Let $X$ be the discrete simplicial complex on $A$, so that each vertex
  $a$ is a facet $\{a\}$. Then a morphism $G : X \pil Y$
  in $\SCt$ is equivalent to give a matching in $Y$, indexed by $A$,
  i.e. a partition
  $B = \cup_{a \in A} B_a$ such that each $B_a$ is a face of $Y$.
\end{example}

\begin{example} Let $Y = \{\emptyset \}$. Then a morphism $X \pil Y$
  in $\SCu$ is equivalent to give a partition $A = \cup_{b \in A} A_b$
  such that no monochromatic set $A_b$ is a face of $X$.
  See Example \ref{eks:surjU-join} for more detail.
  In particular, there are
  many morphisms $f : X \pil Y$ in $\SCu$ such that $X$ is non-empty
  and $Y = \{\emptyset \}$. So there is not a direct geometric
  interpretation of morphisms in $\SCu$. 
\end{example}

\subsection{Categories of rings}
Fix a commutative ring $k$. 
We define three categories of rings
\[ \sqr^0_k, \quad \sqr^1_k, \quad \sqr^2_k. \]

These categories all have the same {\it objects}: Quotients of
polynomial rings $k[x_A]/I$
where $A$ is a finite set and $I$ a squarefree monomial ideal.
So each generator of $I$
is a squarefree monomial $x_E^\mult$ where $E \sus A$. This is
a category of $k$-algebras but the objects come with additional data,
a set of variables.
The categories are distinguished by their {\it morphisms}:

\medskip
In each category a morphism is a $k$-algebra morphism
$k[y_B]/I_2 \pil k[x_A]/I_1$ induced by a $k$-algebra homomorphism
of polynomial rings (which is part of the data): $\phi: k[y_B] \pil k[x_A]$.
The homomorphism $\phi$ has the following form:
\begin{itemize}
\item  In $\sqr^2_k$, it 
  sends each variable $y_b$ to some variable $x_a$.
  So this morphism comes with a set map $g : B \pil A$, and $a = g(b)$.
\item In $\sqr^1_k$, it sends each squarefree monomial
  $y^\mult_C$ (with $C \sus B$) to a {\it squarefree} monomial
  $x^\mult_D$ (with $D \sus A$). Such a morphism is readily seen to
  come from a set map $f : A \pil B$,
and each variable $y_b$ is sent to $x^\mult_{A_b}$ (note that if $b$ is
not in the image of $f$, then $y_b \mapsto 1$). 
\item In $\sqr^0_k$, it sends each variable $y_b$ 
to a sum of variables $x_{a_1} + x_{a_2} + \cdots + x_{a_r}$ where
the sets $A_b = \{a_1, \ldots, a_r\}$ are disjoint for distinct $b$'s,
Again such a morphism comes from a map $f : A \pil B$,
and the variable $y_b$ is sent to $\sum_{a \in A_b} x_{a}$ (note that if $b$ is
not in the image of $f$, then $y_b \mapsto 0$).  
\end{itemize}

Note that in the last category $\sqr^0_k$, the maps do not
correspond to a homomorphism of multigradings $\ZZ^B \pil \ZZ^A$,
since a sum of two or more variables is not homogeneous.
However, morphisms in the
two other categories do send homogeneous elements to
homogeneous elements.

\subsection{Functors from simplicial complexes to
  commutative rings}

Now let $\scu$ be the subcategory of $\SCu$ consisting of
simplicial complexes $X$ on {\it finite} sets. Similarly we
have $\sco$ and $\sct$.
We define a functor: $\scu^{\op} \pil \sqr^1_k$ (the ${\op}$ signifies that
the functor is contravariant), by sending a simplicial complex
to its Stanley-Reisner ring. If $F : X \pil Y$ is a morphism in $\scu$,
let $X \sus X^\prime = f^{* \ii}(Y)$. We then get a $\kr$-algebra homomorphism:
 \[ \kr[y_B]/I_Y \pil \kr[x_A]/I_{X^\prime} \pil \kr[x_A]/I_X. \]
 The second map above  is due to $X$ being a subcomplex of
  $X^\prime$. The first map sends a variable $y_b$ to $x^\mult_{A_b}$. By
  Proposition \ref{pro:funct-nf}b. it is well defined.

  We define a functor: $\sct^{\op} \pil \sqr^2_k$,
by sending a simplicial complex
to its Stanley-Reisner ring. If $G : X \pil Y$ is a morphism in $\sct$,
let $X \sus X^\prime = g^{**}(Y)$. We then have maps
of Stanley-Reisner rings:
  \[ \kr[y_B]/I_Y \pil \kr[x_A]/I_{X^\prime} \pil \kr[x_A]/I_X. \] 
  The second map comes from the inclusion $X \sus X^\prime$. The first
  map sends a variable $y_b$ to $x_{f(b)}$.
  By Proposition \ref{pro:funct-nf}a. it is well defined
  (for its application switch $X$ and $Y$).

 We define a functor: $\sco^{\op} \pil \sqr^0_k$,
by sending a simplicial complex
to its Stanley-Reisner ring. If $F : X \pil Y$ is a morphism in $\sco$,
let $X \sus X^\prime = f^{*\uu}(Y)$. We then have maps
of Stanley-Reisner rings:
  \[ \kr[y_B]/I_Y \pil \kr[x_A]/I_{X^\prime} \pil \kr[x_A]/I_X. \] 
The second map comes from the inclusion $X \sus X^\prime$. The first
map sends a variable $y_b$ to the sum $\sum_{a \in A_b}x_{a}$.
This is well-defined, since if $N$ is a nonface of $Y$,
then for any choices  $a_n \in A_n$ for $n \in N$, the
set $E = \{a_n \, | \, n \in N \}$ is a nonface of $X$ (it cannot
be a face as $f(E)$ is a face of $Y$ for each face $E$ of $X$).

\medskip
\begin{theorem}
  The Stanley-Reisner correspondence gives dualities
  \[ \sco^{\op} \mto{\iso} \sqr^0_k,
    \quad \scu^{\op} \mto{\iso} \sqr^1_k,
  \quad \sct^{\op} \mto{\iso}\sqr^2_k\]
\end{theorem}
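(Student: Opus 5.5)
Since the objects of the simplicial complex categories are simplicial complexes on finite sets and the objects of each $\sqr^i_k$ are quotients $k[x_A]/I$ with $I$ squarefree monomial, I would show that each of the three Stanley-Reisner functors is bijective on objects and bijective on each Hom-set, i.e.\ an isomorphism of categories. Functoriality itself (compatibility with identities and composition) I would check first. In each case the ring map is determined by the set map ($y_b \mapsto x_{g(b)}$, $y_b\mapsto x^\mult_{A_b}$, or $y_b\mapsto \sum_{a\in A_b}x_a$), and composition of set maps corresponds to composition of these substitutions. For example, in $\sqr^1_k$, composing $A\to B\to C$ sends $z_c \mapsto \prod_{b\in B_c} x^\mult_{A_b}=x^\mult_{A_c}$, using that the fibers are disjoint. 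Well-definedness was already established via Proposition \ref{pro:funct-nf} and the argument for $\sco$.

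On objects, the correspondence $X\mapsto k[x_A]/I_X$ is a bijection. A squarefree monomial ideal $I$ determines $X=\{E \,|\, x^\mult_E\notin I\}$, which is a down-set, and $I=I_X$ because $I$ is generated by squarefree monomials. The set $A$ is part of the data on both sides.

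On morphisms, the data of a ring morphism is a polynomial map $\phi$ of the prescribed form. In $\sqr^2_k$ this is exactly a set map $g:B\to A$. In $\sqr^1_k$ and $\sqr^0_k$ I would check that $\phi$ of the prescribed form is exactly given by a set map $f:A\to B$. This is immediate for $\sqr^0_k$, since the disjoint sets $A_b$ are the fibers, provided we read the fibers as covering $A$. For $\sqr^1_k$, sending each $y_b$ to a squarefree monomial with all products $y^\mult_C$ squarefree forces the supports $A_b$ to be disjoint; one must again read off a function with $A=\bigcup A_b$, as the paper's description asserts. So Hom-sets in $\sqr^i_k$ are subsets of the set maps, and the claim reduces to the following: a set map induces a well-defined ring map $k[y_B]/I_Y\to k[x_A]/I_X$ if and only if it is a morphism $X\to Y$ in the corresponding $\mathbf{sc}_i$. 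One direction is the well-definedness already shown.

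The converse is the main work, and I would handle it case by case.

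For $\sqr^1_k$, suppose $\phi(I_Y)\subseteq I_X$. If $C$ is a non-face of $Y$, then $x^\mult_{f^{-1}(C)}\in I_X$. Since $I_X$ is a monomial ideal generated by squarefree monomials, this means $f^{-1}(C)$ is a non-face of $X$. Hence $X\sus f^{*\ii}(Y)$, using the description of $f^{*\ii}$ in Theorem \ref{thm:AB-fp*}b together with Theorem \ref{thm:AB-fpp}b, or directly the condition ``$f^{-1}(C)\in X \Rightarrow C\in Y$''.

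For $\sqr^2_k$, suppose $y^\mult_C\in I_Y$. Then $x^\mult_{g(C)}$, up to repeated variables, lies in $I_X$, so $g(C)$ is a non-face of $X$. I would then show that for a face $D$ of $X$, the set $g^{-1}(D)$ is a face of $Y$: otherwise $g(g^{-1}(D))\sus D$ would be a non-face.

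For $\sqr^0_k$, the hardest case, suppose $f(E)$ is a non-face $N$ of $Y$ for some face $E$ of $X$. I would expand $\phi(y^\mult_N)=\prod_{n\in N}\sum_{a\in A_n}x_a$. This sum contains the squarefree monomial $x^\mult_{E'}$ for a transversal $E'\sus E$ with $f(E')=N$, and its coefficient is $1$. Since the expansion has distinct monomials, each with coefficient $1$, and $I_X$ is monomial, every monomial of the expansion must lie in $I_X$. That contradicts $E'\in X$, provided $k\neq 0$. I expect this coefficient and monomial-ideal bookkeeping, along with the implicit assumption that $k$ is nonzero, to be the main point requiring care.
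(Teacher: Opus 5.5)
Your proposal is correct and follows essentially the same route as the paper: functoriality comes from composing the substitutions, and then for each $\sqr^i_k$ you recover the set map and show it is a morphism in the corresponding category of simplicial complexes, with the $\sqr^0_k$ case handled by the same section/transversal argument on the expansion of $\phi(y^\mult_N)$. Your extra care — bijectivity on objects, reading the $A_b$ as covering $A$, and $k\neq 0$ (which is in fact tacitly needed in all three cases, not only for $\sqr^0_k$) — makes explicit points the paper leaves implicit.
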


\begin{proof}
{\it Functoriality:}  Given morphisms in $\scu$:
  \[ X \mto{F_1} Y \mto{F_2} Z, \quad A \mto{f_1} B \mto{f_2} C. \]
  The maps on Stanley-Reisner rings are
  \[ k[z_C]/I_Z \pil k[y_B]/I_Y \pil k[x_A]/I_X, \quad z_c \mapsto y^\mult_{B_c}
    \mapsto \prod_{b \in B_c} x^\mult_{A_b}. \]
  But the latter is $x^\mult_{A_c}$ for the composition $A \mto{f_2 \circ f_1} C$.

  \medskip
  Given morphisms in $\sct$:
  \[ X \mto{G_1} Y \mto{G_2} Z, \quad A \vmto{g_2} B \vmto{g_1} C. \]
  The maps on Stanley-Reisner rings are
  \[ k[z_C]/I_Z \pil k[y_B]/I_Y \pil k[x_A]/I_X, \quad z_c \mapsto y_{g_1(c)}
    \mapsto x_{g_2 \circ g_1(c)}, \]
  which shows the functoriality.
The composition in the $\sco$-case is similar to the first.

  \medskip
\noindent{\it Duality:}  
Let $\phi: k[x_B]/I_2 \pil k[x_A]/I_1$ be
a homomorphism in one of the $\sqr_k$-categories,
and let $I_2$ and $I_1$ correspond to the
  simplicial complexes $Y$ and $X$ respectively.

\ignore{If $\phi \in \sqr^0_k$, then 
  when  $C \sus B$ is a nonface of $Y$, that $\phi$ is a homomorphism,
  gives that
  each $E \sus A$ with $f(E) = C$ will be a non-face of $X$. So whenever $E$
  is a face of $X$, then $f(E)$ is a face of $Y$. Then we have a morphism
  $F : X \pil Y$ in $\sco$.}

Let $\phi \in \sqr^1_k$. If $C \sus B$ is a non-face of $Y$,
since $\phi$ is a homomorphism, $f^{-1}(C)$ a non-face.
But then if $C \sus B$ is such
  that $f^{-1}(C)$ is a face, then $C$ must be face of $Y$. So
  $f^{**}(X) \sus Y$ and we have a morphism in $\scu$.

 If $\phi \in \sqr^2_k$, then 
   when  $C \sus B$ is a nonface of $Y$, $g(C)$ is a non-face of
   $X$. If $D \sus A$ is a face of $X$, then $C = g^{-1}(D)$ must be
   face, since otherwise $g(C) \sus D$ is a non-face.
   Thus we get a morphism $G : X \pil Y$ in $\sct$. 
  
   Now let  $\phi \in \sqr^0_k$.
   Let  $D \sus A$ be a face, and $C = f(D) \sus B$.
   Then there is a section $s : C \pil D$ of $f_{|D} : D \pil C$.
   So $s(C) \sus D$ and so $s(C)$ is
  face. If $C$ was a non-face then for every section $s$, $x^\mult_{s(C)}$
  is a term in $\phi(x^\mult_C)$ and so $s(C)$ would be a non-face, giving
  a contradiction. Thus $C = f(D)$ is a face and we have a morphism
  $F : X \pil Y$ in $\sco$.   
\end{proof}

\section{Alexander duality}
\label{sec:alexD}

This section is not needed for statements later, but is used briefly
in some of the arguments.
But Alexander duality is a basic duality for simplicial complexes,
and we show it 
interacts well with the functors we study.

\subsection{Alexander duality for posets}
For a poset $P$ there is an isomorphism
\begin{equation} \label{eq:AD-Piso} (\hat{P})^{\op} \overset{\tau_P}\lpil \widehat{(P^{\op})},
  \quad (D,U)^{\op} \mapsto
  (U^{\op}, D^{\op}).
  \end{equation}

This induces an order-reversing map
\[ \hat{P} \overset{\ad_P}{\lpil} \widehat{P^{\op}},
\quad (D,U) \mapsto (U^{\op}, D^{\op}) \]
called {\it Alexander duality} for posets.

Given an order-preserving map $f : P \pil Q$ we have
$f^{\op} : P^{\op} \pil Q^{\op}$.
It is straightforward to verify that we get commutative diagrams
\begin{equation} \label{eq:alexD-CD}
  \xymatrix{\hP \ar[r]^{f^{\uu}} \ar[d]_{\ad} & \hQ \ar[d]^{\ad} \\
    \widehat{P^{\op}} \ar[r]^{(f^{\op})^{\ii}} &  \widehat{Q^{\op}} } 
\qquad
\xymatrix{\hQ \ar[r]^{f^{*}} \ar[d]_{\ad} & \hP \ar[d]^{\ad} \\
\widehat{Q^{\op}} \ar[r]^{(f^{\op})^{*}} &  \widehat{P^{\op}} }
\end{equation}

\subsection{Alexander duality for simplicial complexes}

When the poset $P$ is a simply a set $A$, there is a natural isomorphism
$A^{\op} \overset{\iso}\lpil  A$ by $a^{\op} \mapsto a$. Thus we get
an order-reversing composite map
\[ \hA \overset{\ad_A}\lpil \widehat{A^{\op}} \overset{\iota_A} \lpil \hA, \quad
  (D,U) \mapsto (U^{\op}, D^{\op}) \mapsto (U,D).\]
For an element $(D,U)$ in $\hA$, denote:
\[ (D,U)^{\op} \in (\hA)^{\op}, \quad (D,U)^{\tilde{\op}}:= (U^{\op}, D^{\op})
  \in \widehat{A^{\op}}, \quad (D,U)^t:= (U,D) \in \hA. \]
For a subset ${\mathcal S} \sus \hA$ we get subsets
${\mathcal S}^{\op}, {\mathcal S}^{\tilde{\op}}, {\mathcal S}^t$ by
applying the operations to each of its elements.
We now get the following order-reversing composite,
Alexander duality for simplicial complexes:
\begin{equation*}  AD: \hhA \overset{\ad_{\hA}}\lpil {\widehat{(\hA)^{{\op}}}}
   \overset{{\hat{\tau}_A}} \lpil  \widehat {\widehat{(A^\op)}} 
 \overset{\hat{\iota}_A}\lpil \hhA,
\end{equation*}
where the maps are
\begin{equation*}
  (\cD, \cU) \mapsto (\cU^{\op}, \cD^{\op}) \mapsto
  (\cU^{\tilde{\op}}, \cD^{\tilde{\op}}) \mapsto (\cU^t, \cD^t).
\end{equation*}

\medskip
Given a set function $f : A \pil B$, we have $f^* : \hB \pil \hA$,
and the following commutes:
\begin{equation} \label{eq:alexD-enhatt}
  \xymatrix{\widehat{B^{\op}} \ar[d]^{\iota_B} \ar[r]_{(f^\op)^*}
& \widehat{A^{\op}} \ar[d]^{\iota_A}\\
\hB \ar[r]^{f^*} & \hA}, \quad
\xymatrix{\widehat{A^{\op}} \ar[d]^{\iota_A} \ar[r]_{(f^\op)^\uu}
& \widehat{B^{\op}} \ar[d]^{\iota_B}\\
\hA \ar[r]^{f^\ii} & \hB}.
\end{equation}
From \eqref{eq:alexD-CD} and \eqref{eq:alexD-enhatt} we get
commutative diagrams:
  \[ \xymatrix{
      \hhA \ar[d]_{\ad_{\hA}} \ar[r]^{{f^{**}}} &\hhB \ar[d]^{\ad_{\hB}} \\
       \widehat{(\hA)^{\op}} \ar[d]^{\hat{\tau}_A}
       \ar[r]^{{((f^{*})^\op)^*}} &  \widehat{\hB^{\op}} \ar[d]^{\hat{\tau}_A} \\
       \widehat{\widehat{A^{\op}}} \ar[d]^{\hat{\iota}_A} \ar[r]^{(f^{\op})^{* *}}
       & \widehat{\widehat{B^{\op}}}\ar[d]^{\hat{\iota}_A} \\
      \hhA \ar[r]^{{f^{**}}}  & \hhB }       
    \qquad
\xymatrix{
      \hhB \ar[d]_{\ad_{\hB}} \ar[r]^{{(f^{*})^\uu}} &\hhA \ar[d]^{\ad_{\hA}} \\
       \widehat{(\hB)^{\op}} \ar[d]^{\hat{\tau}_B}
       \ar[r]^{{((f^{*})^\op)^\ii}} &  \widehat{\hA^{\op}} \ar[d]^{\hat{\tau}_B} \\
       \widehat{\widehat{B^{\op}}} \ar[d]^{\hat{\iota}_B} \ar[r]^{(f^{\op})^{* \ii}}
       & \widehat{\widehat{A^{\op}}}\ar[d]^{\hat{\iota}_B} \\
      \hhB \ar[r]^{{ f^{* \ii}}}  & \hA }  
    \qquad
\xymatrix{
      \hhA \ar[d]_{\ad_{\hA}} \ar[r]^{{f^{\uu \uu}}} &\hhB \ar[d]^{\ad_{\hB}} \\
       \widehat{(\hA)^{\op}} \ar[d]^{\hat{\tau}_A}
       \ar[r]^{{((f^{\uu})^\op)^\ii}} &  \widehat{\hB^{\op}} \ar[d]^{\hat{\tau}_A} \\
       \widehat{\widehat{A^{\op}}} \ar[d]^{\hat{\iota}_A} \ar[r]^{(f^{\op})^{\ii \ii}}
       & \widehat{\widehat{B^{\op}}}\ar[d]^{\hat{\iota}_A} \\
      \hhA \ar[r]^{{f^{\ii \ii}}}  & \hhB }  
    \]

    For a simplicial complex $X$ write $X^\vee$ for its Alexander dual.
    The above gives:

    \begin{proposition} \label{pro:ad-ad}
      If $X$ (resp. $Y$ is a simplicial complex
      on $A$ (resp. $B$), denote by $X^\vee$ (resp. $Y^\vee$) its
      Alexander dual. Then:
      \[ f^{**}(X^\vee) = f^{**}(X)^\vee, \quad
         f^{\uu *}(Y^\vee) = f^{\ii *}(Y)^\vee, \quad
         f^{\uu \uu}(X^\vee) = f^{\ii \ii}(X)^\vee. \]
     \end{proposition}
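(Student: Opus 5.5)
We will argue directly from the explicit descriptions in Theorems \ref{thm:AB-fp*} and \ref{thm:AB-fpp}, using the concrete form of Alexander duality: for $X$ on $A$, a set $D\sus A$ is a face of $X^\vee$ iff $A\backslash D$ is a non-face of $X$. This mirrors the map $(\cD,\cU)\mapsto(\cU^t,\cD^t)$. The commutative diagrams above essentially give the result. Still, a direct set-theoretic check is more transparent, and we use the diagrams only as a guide to which functors must correspond.

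The basic tool is how complements interact with $f^{-1}$, images and cores. For $C\sus B$ we have $f^{-1}(B\backslash C)=A\backslash f^{-1}(C)$. For $D\sus A$ we have $\core_f(A\backslash D)=B\backslash f(D)$, since $b$ has its whole fiber outside $D$ iff $b\notin f(D)$.

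The first identity uses Theorem \ref{thm:AB-fpp}b. A set $C$ is a face of $f^{**}(X^\vee)$ iff $f^{-1}(C)\in X^\vee$, iff $A\backslash f^{-1}(C)=f^{-1}(B\backslash C)\notin X$. This says $B\backslash C\notin f^{**}(X)$, i.e. $C\in f^{**}(X)^\vee$.

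The second identity uses Theorem \ref{thm:AB-fp*}. A set $T$ is a face of $f^{\uu*}(Y^\vee)$ iff $f(T)\in Y^\vee$, iff $B\backslash f(T)=\core_f(A\backslash T)\notin Y$. By part b this says $A\backslash T\notin f^{\ii*}(Y)$, i.e. $T\in f^{\ii*}(Y)^\vee$.

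The third identity is the only step needing care, because Theorem \ref{thm:AB-fpp}c describes $f^{\ii\ii}$ through a universal condition over all $D$ with $\core_f(D)=C$. A set $C$ is a face of $f^{\ii\ii}(X)^\vee$ iff $B\backslash C\notin f^{\ii\ii}(X)$. This holds iff some $D$ with $\core_f(D)=B\backslash C$ is a non-face of $X$. Substituting $D=A\backslash D'$, the condition $\core_f(D)=B\backslash C$ becomes $f(D')=C$. So the condition reads: some $D'$ with $f(D')=C$ lies in $X^\vee$.

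By Theorem \ref{thm:AB-fpp}a, $f^{\uu\uu}(X^\vee)$ is generated by images of faces. So $C$ is a face of it iff $C\sus f(D')$ for some $D'\in X^\vee$. We must reconcile ``$C\sus f(D')$'' with ``$C=f(D')$''. Given $C\sus f(D')$, shrink $D'$ to $D''=D'\cap f^{-1}(C)$. Then $f(D'')=C$, and $D''\in X^\vee$ since $X^\vee$ is a simplicial complex. This closes the argument. Alternatively, all three identities can be read off the three diagrams above by tracking the composite $AD=\hat\iota\circ\hat\tau\circ\ad$.
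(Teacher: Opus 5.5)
Your proof is correct, but it takes a different route from the paper. The paper reads the proposition off the three stacked commutative diagrams. There, poset Alexander duality intertwines $f^{\uu}$ with $(f^{\op})^{\ii}$ and $f^*$ with $(f^{\op})^*$ (diagram \eqref{eq:alexD-CD}). This is applied one level up, to $f^*:\hat{B}\to\hat{A}$ and to $f^{\uu}:\hat{A}\to\hat{B}$, and the identifications $\hat\tau$, $\hat\iota$ bring everything back to $\hhA$ and $\hhB$.

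You instead check the three identities face by face from the explicit descriptions in Theorems \ref{thm:AB-fp*} and \ref{thm:AB-fpp}. The only extra input is $f^{-1}(B\backslash C)=A\backslash f^{-1}(C)$ and $\core_f(A\backslash D)=B\backslash f(D)$. The latter is exactly the set-level content of the left square of \eqref{eq:alexD-CD}: complementation turns images into cores. Your shrinking step $D''=D'\cap f^{-1}(C)$ correctly reconciles the ``generated by images'' description of $f^{\uu\uu}$ with the ``some $D'$ with $f(D')=C$'' condition coming from $f^{\ii\ii}$.

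What your route buys is an elementary, fully checkable argument. It avoids the $\tau$/$\iota$ bookkeeping that the paper leaves as ``straightforward'', and that bookkeeping is easy to get wrong. Since $\iota$ is mere relabeling, the right square of \eqref{eq:alexD-enhatt} as printed should have $f^{\uu}$, not $f^{\ii}$, in its bottom row.

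What the structural route buys is the reason the functors pair up as they do, and it shows you did more work than needed. The map $X\mapsto X^\vee$ is an order-reversing involution. So if you set $\overline{F}(X)=F(X^\vee)^\vee$, an adjunction $F\adj G$ becomes $\overline{G}\adj\overline{F}$. Once your first identity $\overline{f^{**}}=f^{**}$ is known:
\begin{itemize}
\item $f^{*\uu}\adj f^{**}$ yields $f^{**}\adj\overline{f^{*\uu}}$, hence $\overline{f^{*\uu}}=f^{*\ii}$;
\item $f^{\uu\uu}\adj f^{*\uu}$ then yields $f^{*\ii}\adj\overline{f^{\uu\uu}}$, hence $\overline{f^{\uu\uu}}=f^{\ii\ii}$.
\end{itemize}
Both steps use only uniqueness of adjoints. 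So the second and third identities follow without needing Theorem \ref{thm:AB-fpp}c at all.
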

     
\section{Terminology for simplicial complexes}
\label{sec:term}

We recall notions and terminology for simplicial complexes.
In the subsequent sections this enables us to get a more refined
understanding of the adjoint functors.

The top element in $\hhA$ corresponds to the simplex $\Delta(A)$,
consisting of all
subsets of $A$. The bottom element is the empty set $\emptyset$, containing
no subsets of $A$. (Note that it is uniquely covered by the simplicial complex
consisting of a single subset of $A$,
namely $\{\emptyset \}$.)
The boundary of the simplex $\Delta(A)$ is written $\pdel(A)$, and consists
of all subsets of $A$ save the full set $A$. (The simplex $\Delta(A)$
uniquely covers this boundary.)

For a simplicial complex $X$ corresponding to the cut $(\cD,\cU)$
of $\hat{A}$,
the maximal elements of $X$ (the $D$ in the cuts $(D,U)$ maximal in $\cD$)
are the {\it facets} of $X$.
For the minimal elements $(D,U)$ of $\cU$, the $D$ is usually referred to as
a {\it minimal non-face} of $X$. In the present article we
shall, for elegance of terminology and statements, refer to them
as the {\it co-facets} of $X$. 
Note that the minimal generators of the Stanley-Reisner ideal $I_X$
are precisely the $x^\mult_N$ where $N$ are the co-facets of $X$.

For $X$ a simplicial complex on $A$ and $E \sus A$, the {\it restriction}
$X_{|E}$ is the simplicial complex on $E$ consisting of all $F \sus E$ such that $F \in X$. The {\it link}
$lk_E(X)$ is the simplicial complex on $A \backslash E$ consisting
of all $F \sus A \backslash E$ such that
$F \cup E \in X$. In particular, if $E$ is not in $X$, the link
is the empty set $\emptyset$. 

If $Y$ is a simplicial complex on $B$ where $B$ is disjoint from $A$,
the {\it join} $X*Y$ consists of all $F \cup G$ with $F \in X$ and
$G \in Y$. The {\it cone} $\cone_B(X)$ is the join $X * \Delta(B)$.

\medskip
In the next two sections we
investigate in detail the functors \eqref{eq:adj-AB} and \eqref{eq:adj-BA},
and the associated changes of the Stanley-Reisner ring.
So $f : A \pil B$ is a function. It may be decomposed
into a surjection followed by an injection. We investigate these
cases separately. 

\section{Injections}
\label{sec:inj}

We identify an injection $f : A \pil B$
 with an inclusion $A \sus B$, and let $E = B \backslash A$.
To a simplicial complex $X$ on $A$, we write
$I_X$ for its associated squarefree monomial ideal in $k[x_A]$.
Similarly, for $Y$ on $B$ we write $I_Y$ for its ideal in $k[x_B]$.

\subsection{The functors $f^{\uu *}$ and $f^{\ii *}$}

\begin{proposition} \label{pro:inj-fp*}
Given an inclusion $f : A \sus B$.
  
\begin{itemize} 
\item[a.] The functor $f^{* \uu} : \hhat{B} \pil \hhat{A}$ sends 
$Y$ to its restriction $X = Y_{|A}$.
\item[b.] The functor $f^{*\ii} : \hhat{B} \pil \hhat{A}$ sends $Y$
  to the link $X = \text{lk}_E Y$.
\end{itemize}
\end{proposition}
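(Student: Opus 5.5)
Both parts follow by specializing Theorem \ref{thm:AB-fp*} to the inclusion $f : A \sus B$, using the identities $f^{*\uu} = f^{\uu *}$ and $f^{*\ii} = f^{\ii *}$ established earlier. For an inclusion we have $f(T) = T$ for every $T \sus A$. The $f$-core is also easy to compute. A vertex $b \in A$ has fiber $\{b\}$, so $b \in \core_f(T)$ exactly when $b \in T$. A vertex $b \in E = B \backslash A$ has empty fiber, so it always lies in the core. Hence
\[ \core_f(T) = T \cup E \quad \text{for all } T \sus A. \]

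For part a, Theorem \ref{thm:AB-fp*}a says that the faces of $f^{\uu *}(Y)$ are the $T \sus A$ such that $f(T) = T$ is a face of $Y$. These are exactly the subsets of $A$ lying in $Y$, which by definition is the restriction $Y_{|A}$.

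For part b, Theorem \ref{thm:AB-fp*}b says that the faces of $f^{\ii *}(Y)$ are the $T \sus A$ such that $\core_f(T) = T \cup E$ is a face of $Y$. Since $A = B \backslash E$, the definition of the link says that $\text{lk}_E Y$ consists of the $F \sus A$ with $F \cup E \in Y$. These two descriptions coincide. This also covers the degenerate case: if $E \notin Y$, then no $T \cup E$ is a face, so both sides are the empty complex $\emptyset$, in line with the paper's convention that the link is then $\emptyset$.

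The main point needing care is the core computation, in particular that the vertices of $E$, having empty fibers, always belong to the core. This is already noted in the paper right after the definition of $\core_f$. Everything else is direct unpacking of definitions.
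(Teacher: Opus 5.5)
Your proposal is correct and matches the paper's own argument: the paper also gets both parts directly from Theorem~\ref{thm:AB-fp*}, using $f(T)=T$ for part a and the single observation $\core_f(D) = D \cup E$ for $D \sus A$ for part b. Your treatment of the fibers (singletons over $A$, empty over $E$) and of the degenerate case $E \notin Y$ spells out details the paper leaves implicit.
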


\begin{corollary} \label{cor:inj-BA} Given an inclusion $f : A \sus B$.   
  \begin{itemize}
  \item[a.] The Stanley-Reisner ideal of $X = f^{*\uu} (Y)$ is
    $I_X = I_Y \cap k[x_A]$.
    \item[b.] The Stanley-Reisner ideal of $X = f^{*\ii} (Y)$ is
      $I_X = (I_Y:x_E) \cap k[x_A]$.
    \end{itemize} 
  \end{corollary}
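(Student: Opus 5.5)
The corollary follows from Proposition \ref{pro:inj-fp*} once we compare the monomials in the two ideals. Every ideal involved is a squarefree monomial ideal, or an intersection or colon of such ideals, and these are monomial ideals spanned by monomials. Hence in each part it is enough to check that a monomial $m$ in $k[x_A]$ lies in the left side exactly when it lies in the right side. Moreover, a monomial lies in a Stanley-Reisner ideal $I_Z$ exactly when its support is a non-face of $Z$, so only supports matter.

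For part a, Proposition \ref{pro:inj-fp*}a gives $X = Y_{|A}$. Take a monomial $m \in k[x_A]$ with support $S \sus A$. Then $m \in I_X$ iff $S \notin X$. Since $S \sus A$, this holds iff $S \notin Y$, which is the condition $m \in I_Y$. Because $I_Y \cap k[x_A]$ is spanned by the monomials of $I_Y$ that involve only variables from $A$, the two ideals coincide.

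For part b, Proposition \ref{pro:inj-fp*}b gives $X = \lk_E Y$, with $x_E = x^\mult_E$. Take a monomial $m \in k[x_A]$ with support $S \sus A$.
\begin{itemize}
\item $m \in I_X$ iff $S \notin \lk_E Y$, which means $S \cup E \notin Y$.
\item $m \in (I_Y : x_E)$ iff $m x_E \in I_Y$. The support of $m x_E$ is $S \cup E$, since $S \cap E = \emptyset$. So this holds iff $S \cup E \notin Y$.
\end{itemize}
The two conditions agree.

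The only step needing care is knowing that the colon ideal $(I_Y : x_E)$ is again a monomial ideal, spanned by the monomials $m$ with $m x_E \in I_Y$. This holds because a polynomial $p$ satisfies $p x_E \in I_Y$ iff each term $t$ of $p$ satisfies $t x_E \in I_Y$: multiplication by a monomial is injective on terms and $I_Y$ is a monomial ideal. Intersecting with $k[x_A]$ keeps exactly the monomials supported in $A$, and the argument is complete.
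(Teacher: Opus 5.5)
Your proof is correct and follows the same route as the paper. Both identify $X$ as $Y_{|A}$ or $\lk_E Y$ via Proposition \ref{pro:inj-fp*}, then check $x^\mult_D \in I_X \iff D \cup E \notin Y \iff x^\mult_D x^\mult_E \in I_Y$. Your added justification that the intersection and colon are monomial ideals, so checking monomials suffices, only makes explicit what the paper leaves implicit.
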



  \begin{proof} This is immediate from Theorem \ref{thm:AB-fp*}.
    For b. note that $\core_f(D) = D \cup E$ for $D \sus A$. 
    \end{proof}

\begin{proof}[Proof of Corollary \ref{cor:inj-BA}]
Part a. is immediate from the definition of Stanley-Reisner ideal.

b. That a monomial $x^\mult_D$ is in the Stanley-Reisner ideal
$I_X$ means that $D$ is not in $X = \lk_E(Y)$. Equivalently $D \cup E$
is not in $Y$ and so $x^\mult_D \cdot x^\mult_E$ is in $I_Y$.
\end{proof}

\begin{remark} The restriction and link are very important notions for
  simplicial complexes. The minimal free resolution
  of the Stanley-Reisner ring $k[X]$ 
  as a module over $k[x_A]$ has multigraded betti numbers given by the
  reduced cohomology $\tilde{H}^i(X_{|A}, k)$ of the restrictions.
  This is a classical theorem of Hochster and
  a founding theorem of Stanley-Reisner theory, \cite{Hoch} and
  found in all basic textbooks \cite{HeHi, Peeva, MiSt, Stanley}.
  This may also be formulated in terms of links, \cite[Cor.8.1.4]{HeHi}.
  In particular the notion
  of $X$ being Cohen-Macaulay (derived from the algebraic notion of
  $k[X]$ being a Cohen-Macaulay ring) is that all the reduced homology groups
  $\tilde{H}_i(\lk_E X,k)$ of links  vanish for $i$ less than the dimension
  of $\lk_E X$, \cite{Reisner}.
\end{remark}

\subsection{The functors $f^{\uu \uu}, f^{**}, f^{\ii \ii}$}

For a set $E$, we consider variables $x_e$ indexed by $e \in E$.
We define the following ideals:
\begin{itemize}
\item $(x^+_E) = \sum_{e \in E} (x_e)$, the ideal generated by the variables
  $x_e$,
\item $(x^\mult_E) = \prod_{e \in E} (x_e)$, the ideal generated by the
  monomial $x^\mult_E$.
\end{itemize}

\begin{proposition}

\begin{itemize} Given an inclusion $f : A \sus B$. 
\item[a.] $f^{\uu \uu} : \hhat{A} \pil \hhat{B}$ sends $X$ to $Y$, which
  is $X$ considered
  as a simplicial complex on $B$ instead of $A$.
  So $f^{\uu \uu}(X)$ consists of all $C \sus B$
  such that $C \sus A$ and $C \in X$.
\item[b.] $f^{**} : \hhat{A} \pil \hhat{B}$ sends $X$ to the cone of
  $X$ over $E$, $Y = \cone_E(X)$, consisting of all $C \sus B$ such that
  $C \cap A \in X$.
\item[c.] $f^{\ii \ii} : \hhat{A} \pil \hhat{B}$ sends
  $X$ to the union of cones $\cone_E(X) \cup \cone_A(\pdel(E))$ where
  $\pdel(E)$ is the boundary of the simplex on $E$.
\end{itemize}

\medskip
The Stanley-Reisner ideals in $k[x_B]$ of
$f^{\uu \uu}(X), f^{**}(X), f^{\ii\ii}(X)$ are the ideals:
\[ (I_X) + (x^\mult_E), \quad (I_X), \quad (I_X) \cdot (x^\mult_E). \]
\end{proposition}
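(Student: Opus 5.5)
The plan is to specialize Theorem \ref{thm:AB-fpp} to an inclusion $f : A \sus B$ with $E = B \backslash A$. The only inputs are the image $f(D) = D$ and the core. For $D \sus A$ we have $\core_f(D) = D \cup E$, since the fibers over $E$ are empty and the fiber over $a \in A$ is $\{a\}$. For $C \sus B$ we have $f^{-1}(C) = C \cap A$.

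\textbf{Describing the three complexes.}

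For (a), Theorem \ref{thm:AB-fpp}a says $f^{\uu\uu}(X)$ is generated by the sets $f(D) = D$ with $D \in X$. Since $X$ is already closed under subsets, this is $X$ itself, viewed on $B$.

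For (b), Theorem \ref{thm:AB-fpp}b says $C \in f^{**}(X)$ iff $f^{-1}(C) = C \cap A \in X$. That is exactly the description of $\cone_E(X) = X * \Delta(E)$.

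For (c), Theorem \ref{thm:AB-fpp}c says $C \in f^{\ii\ii}(X)$ iff every $D \sus A$ with $D \cup E = C$ lies in $X$. I will split into two cases according to whether $E \sus C$.
\begin{itemize}
\item If $E \not\subseteq C$, no $D \sus A$ satisfies $D \cup E = C$. The condition then holds vacuously, so every such $C$ is a face. These $C$ are precisely the sets $G \cup F$ with $G \sus A$ and $F \in \pdel(E)$, i.e.\ the faces of $\cone_A(\pdel(E))$.
\item If $E \sus C$, the unique $D$ is $D = C \cap A$. The condition becomes $C \cap A \in X$, so these faces lie in $\cone_E(X)$.
\end{itemize}
Conversely, faces of $\cone_E(X)$ not containing $E$ already lie in $\cone_A(\pdel(E))$. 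Hence $f^{\ii\ii}(X) = \cone_E(X) \cup \cone_A(\pdel(E))$.

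\textbf{The Stanley-Reisner ideals.}

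Each ideal below is a squarefree monomial ideal, so I compare them by checking which squarefree monomials $x^\mult_C$, $C \sus B$, lie in each. Write $C = D \sqcup F$ with $D \sus A$ and $F \sus E$. The key fact, used every time, is that $x^\mult_C \in (I_X)k[x_B]$ iff $D \notin X$. This holds because $I_X k[x_B]$ is generated by monomials in the variables of $A$ only.
\begin{itemize}
\item For (a), $C \notin f^{\uu\uu}(X)$ iff $F \neq \emptyset$ or $D \notin X$. For $E \ne \emptyset$, $F \neq \emptyset$ should mean $x^\mult_C$ is divisible by some $x_e$, $e \in E$. So the ideal generated by these monomials is $(I_X) + (x^+_E)$, not $(I_X) + (x^\mult_E)$. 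I expect the stated $(x^\mult_E)$ is a typo for $(x^+_E)$ and will prove the version with $(x^+_E)$, checking the convention once more.
\item For (b), $C \notin \cone_E(X)$ iff $D \notin X$, giving $(I_X)$ directly.
\item For (c), $C$ is a non-face iff $E \sus C$ and $C \cap A \notin X$. That is, $x^\mult_E$ divides $x^\mult_C$ and $x^\mult_D \in I_X$. Since $A$ and $E$ are disjoint, this is the ideal $(I_X)\cdot(x^\mult_E)$, which equals $(I_X) \cap (x^\mult_E)$.
\end{itemize}

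The main obstacle is bookkeeping rather than depth: the vacuous case in (c), and confirming that (a) really needs $x^+_E$. As a cross-check, I will verify that (a) and (c) are Alexander dual, via Proposition \ref{pro:ad-ad} applied to $X^\vee$.
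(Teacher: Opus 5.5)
Your proposal is correct and follows the paper's route: the paper's proof also specializes Theorem~\ref{thm:AB-fpp} using $\core_f(D) = D \cup E$ and $f^{-1}(C) = C \cap A$. For the $f^{\ii\ii}$ ideal the paper intersects the ideals $(I_X)$ and $(x^\mult_E)$ of the two cones instead of listing non-faces, which amounts to the same computation. Your diagnosis of part (a) is also right: every vertex $\{e\}$ with $e \in E$ is a non-face of $f^{\uu\uu}(X)$, so its Stanley--Reisner ideal is $(I_X) + (x^+_E)$. The printed $(I_X) + (x^\mult_E)$ agrees with this in general only when $|E| = 1$, and the paper's proof never actually checks this ideal.
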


\begin{proof}
  Statements a, b, and c on simplicial complexes
  are consequences of Theorem \ref{thm:AB-fpp}.

  As for the statement on the ideal of $Y = f^{\ii \ii}(X)$,
  the ideal of $\cone_E(X)$ is $(I_X)$, the ideal of
  $\cone_A(\pdel(E))$ is $(x^\mult_E)$. Thus
  the ideal of the union is the intersection of these two ideals.
  \end{proof}

  In the simplest case we have the following.
  \begin{corollary} Suppose $E = B \backslash A = \{e \}$ consists
    of one element. 
    Letting $\Delta(A)$ be the full simplex on $A$, we have:
    \[ f^{\uu \uu}(X) = X, \quad f^{**}(X) = \cone_{\{e\}}(X),
      \quad f^{\ii \ii}(X) = \cone_{\{e\}}(X) \cup \Delta(A). \]
  \end{corollary}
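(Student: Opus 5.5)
This is a direct specialization of the preceding proposition to $E = \{e\}$, so the plan is to read off each of the three descriptions with $E$ a single point. Throughout, $f : A \sus B = A \cup \{e\}$ is the inclusion, and every subset $C \sus B$ is either $C \sus A$ or $C = C' \cup \{e\}$ with $C' \sus A$.

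\emph{First two identities.} By part a of the preceding proposition, $f^{\uu \uu}(X)$ is $X$ regarded as a complex on $B$. This is the stated $f^{\uu \uu}(X) = X$, with $X$ now viewed on the larger vertex set. By part b, $f^{**}(X) = \cone_E(X) = \cone_{\{e\}}(X)$, and nothing further is needed.

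\emph{Third identity.} Part c gives $f^{\ii \ii}(X) = \cone_{\{e\}}(X) \cup \cone_A(\pdel(\{e\}))$. The key observation is that $\pdel(\{e\})$ consists of all subsets of $\{e\}$ except $\{e\}$ itself, so $\pdel(\{e\}) = \{\emptyset\}$. Hence $\cone_A(\pdel(\{e\})) = \{\emptyset\} * \Delta(A) = \Delta(A)$, the full simplex on $A$ viewed on $B$. This yields $f^{\ii \ii}(X) = \cone_{\{e\}}(X) \cup \Delta(A)$.

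\emph{Consistency check.} I would verify the result against the ideal descriptions $(I_X)+(x_e)$, $(I_X)$ and $(I_X)\cdot(x_e)$. For instance, $(I_X)\cdot(x_e) = (I_X) \cap (x_e)$, and $(x_e)$ is the Stanley-Reisner ideal of $\Delta(A)$ on $B$. This matches the description of $f^{\ii \ii}(X)$ as a union of two complexes, whose ideal is the intersection of their ideals.

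The only step requiring any care is the degenerate boundary $\pdel(\{e\}) = \{\emptyset\}$, which is not the empty complex $\emptyset$. Apart from that, the argument is pure substitution into the preceding proposition.
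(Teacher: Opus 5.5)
Your proposal is correct and follows essentially the paper's route. The paper gives no separate proof: the corollary is just the specialization of the preceding proposition to $E=\{e\}$, and your observation that $\pdel(\{e\})=\{\emptyset\}$, hence $\cone_A(\pdel(\{e\}))=\Delta(A)$, is the only point that needs any comment. Your ideal-theoretic consistency check is not required.
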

  
  \begin{remark} Given $X$ in $\hhat{A}$.  By Lemma \ref{lem:adj-fiber},
   the elements $Y$ of $\hhat{B}$ such that $f^{*\uu}(Y) = X$ are
   those in the range
 \[ f^{\uu \uu}(X) = X \text{ (considered on $B$)} \sus Y \sus 
    f^{**}(X) = \cone_E(X). \]

   Similarly the elements $Y$ of $\hhat{B}$ such that $f^{*\ii}(Y) = X$ are
   those in the range
   \[ f^{* *}(X) = \cone_E(X) \sus Y \sus
     f^{\ii \ii}(X) = \cone_E(X) \cup \cone_A(\pdel(E)). \]
  \end{remark}

\section{Surjections}
\label{sec:surj}

 Let $f : A \pil B$ be a surjection. We let $A_b$ be the inverse image
  of $b \in B$. Then $A = \coprod_b A_b$ is a partition of $A$.
  In this section $X$ is a simplicial complex on $A$ and $Y$ a simplicial
 complex on $B$.

\subsection{The functors $f^{\uu *}$ and $f^{\ii *}$}

For $D \sus A$ recall the notion $\core_f(D) \sus B$.
Note that given
\[ C = \{b_1, \ldots, b_t \} \sus B = \{b_1, b_2, \ldots, b_m\}, \]
the maximal $D$ such that $\core_f (D) = C$
are precisely the facets of the join
\[ \Delta(\cup_{1}^t  A_{b_i}) * \pdel(A_{b_{t+1}}) * \cdots * \pdel(A_{b_m}). \]

\begin{definition} Given a surjection $f : A \pil B$. 
  A simplicial complex $X$ on $A$ is a {\it lower $f$-complex}
  if for every $b \in B$ and facet $F$ of $X$ then $F \cap A_b$
  is either $A_b$ or empty.
  Equivalently every facet $F$ of $X$ is $f^{-1}(\core_f(F))$.

  The simplicial complex $X$ is an {\it upper $f$-complex}
  if whenever $D$ is a face of $X$ with $C = \core_f(D)$, then
  every $D^\prime \sus A$ with $\core_f(D^\prime) = C$ is a
  face of $X$. Thus again $X$ is determined by its cores. 
\end{definition}

We get one-to-one correspondences:
\begin{align*}
\text{upper $f$-complexes on $A$ } & \overset{1-1}{\leftrightarrow}
  \text{ simplicial complexes on $B$} \\
  & \overset{1-1}{\leftrightarrow}
  \text{ lower $f$-complexes on $A$} 
\end{align*}

For $Y$ on $B$ denote by $Y_f$ the corresponding lower $f$-complex on $A$,
and by $Y^f$ the corresponding upper $f$-complex.
Note that the facets of $Y$ and $Y_f$ are in bijection 
by $S \mapsto f^{-1}(S)$, and the co-facets of
$Y$ and $Y^f$, are in bijection by $N \mapsto f^{-1}(N)$.

\begin{proposition} \label{pro:surj-fp*}
  Given a surjection $f : A \pil B$. 
  \begin{itemize}
  \item[a.]
    The map $f^{\uu *} : \hhat{B} \pil \hhat{A}$ sends $Y$ in $\hhat{B}$
    to the lower $f$-complex $X = Y_f$.
   \item[b.] The map $f^{\ii *} : \hhat{B} \pil \hhat{A}$ sends $Y$
     in $\hhat{B}$ to the upper $f$-complex $X = Y^f$.
   \end{itemize}
   \end{proposition}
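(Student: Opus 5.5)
The plan is to start from the explicit descriptions in Theorem \ref{thm:AB-fp*}: $f^{\uu *}(Y)$ has as faces all $T \sus A$ with $f(T) \in Y$, and $f^{\ii *}(Y)$ has as faces all $T \sus A$ with $\core_f(T) \in Y$. It then remains to match each of these with $Y_f$, respectively $Y^f$. Since lower and upper $f$-complexes are characterised through their facets, respectively their cores, the comparison is combinatorial.

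For part a, set $X = f^{\uu *}(Y)$ and identify its facets. If $T$ is a face, then $f(T) \in Y$. Since $f$ is surjective, $\core_f(f^{-1}(f(T))) = f(T)$, and $f(f^{-1}(f(T))) = f(T)$, so $T \sus f^{-1}(f(T)) \in X$. Hence every facet of $X$ has the form $f^{-1}(C)$ with $C \in Y$, and maximality forces $C$ to be a facet of $Y$. Such a set meets each $A_b$ in either $A_b$ or $\emptyset$, so $X$ is a lower $f$-complex. Its facets $f^{-1}(S)$, with $S$ running over the facets of $Y$, are exactly the facets of $Y_f$ under the bijection $S \mapsto f^{-1}(S)$. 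So $X = Y_f$. Equivalently, one checks that $\core_f$ of a facet recovers the facet $S$ of $Y$, which gives the correspondence directly.

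For part b, set $X = f^{\ii *}(Y)$. By construction membership of $T$ in $X$ depends only on $\core_f(T)$. So whenever $D \in X$ and $\core_f(D') = \core_f(D)$, also $D' \in X$, and $X$ is an upper $f$-complex. Its cores, namely the sets $\core_f(D)$ for $D \in X$, are exactly the faces of $Y$. This uses surjectivity: for $C \in Y$ the set $f^{-1}(C)$ has core $C$. So $X$ is the upper $f$-complex corresponding to $Y$, that is $X = Y^f$. As a consistency check, the co-facets match via $N \mapsto f^{-1}(N)$ by Lemma \ref{lem:AB-Y}b.

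The main point needing care is making precise the one-to-one correspondences stated informally before the proposition. One has to ensure that ``the lower $f$-complex corresponding to $Y$'' means the complex generated by the sets $f^{-1}(S)$ for $S$ a facet of $Y$, and that the upper one consists of the $D$ with $\core_f(D) \in Y$. Both readings rely on surjectivity, so that $f(f^{-1}(C)) = C$ and $\core_f(f^{-1}(C)) = C$. Once these definitions are fixed as above, both parts follow directly. For infinite $Y$ without facets I would phrase part a instead as: $X$ is generated by the sets $f^{-1}(C)$ with $C \in Y$.
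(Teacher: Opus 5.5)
Your proposal is correct and takes the same route as the paper, which simply says the proposition follows from Theorem~\ref{thm:AB-fp*}: the faces of $f^{\uu *}(Y)$ are the $T$ with $f(T)\in Y$, those of $f^{\ii *}(Y)$ are the $T$ with $\core_f(T)\in Y$, and once the informal correspondences are pinned down (as you do) these are exactly $Y_f$ and $Y^f$. In part a you leave one step implicit, namely that each $f^{-1}(S)$ with $S$ a facet of $Y$ really is a facet of $f^{\uu *}(Y)$; it is immediate, since $f^{-1}(S)\sus T$ with $f(T)\in Y$ gives $S\sus f(T)$, hence $f(T)=S$ and $T\sus f^{-1}(S)$.
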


   \begin{corollary} \hskip 1mm
     \begin{itemize}
\item[a.] The ideal of $Y_f$ in $k[x_A]$ is the ideal generated by
all product ideals $\prod_{b \in C}(x^{\pluss}_{A_b}) $ for $y^{\mult}_C$ in
$I_Y \sus k[y_B]$. 
\item[b.] 
   The ideal of $Y^f$ in $k[x_A]$ is the ideal generated by
   all monomials $x^\mult_{f^{-1}(C)} = \prod_{b \in C} x^\mult_{A_b}$ for all
   $y^\mult_C$ in
   $I_Y \sus k[y_B]$.
 \end{itemize}
 \end{corollary}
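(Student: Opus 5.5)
The plan is to read off the non-faces of $Y_f$ and $Y^f$ from Proposition \ref{pro:surj-fp*} together with the explicit descriptions in Theorem \ref{thm:AB-fp*}. Then, using the fact that a squarefree monomial ideal is determined by the squarefree monomials it contains, we check that these non-faces are exactly the squarefree monomials in the proposed ideals.

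For part a, Theorem \ref{thm:AB-fp*}a says that $D \sus A$ is a face of $Y_f = f^{\uu *}(Y)$ iff $f(D)$ is a face of $Y$. Hence $x^\mult_D \in I_{Y_f}$ iff $f(D)$ is a non-face of $Y$, that is, iff $y^\mult_{f(D)} \in I_Y$.

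Next we expand the product ideal. The ideal $\prod_{b \in C}(x^\pluss_{A_b})$ is generated by the monomials $\prod_{b\in C} x_{a_b}$ with $a_b \in A_b$. Since the fibers $A_b$ are disjoint, these monomials are exactly $x^\mult_E$ for the sets $E \sus A$ with $f(E) = C$ and $|E \cap A_b| = 1$ for $b \in C$. So the ideal $J$ generated by all such products, over $y^\mult_C \in I_Y$, is a squarefree monomial ideal.

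It remains to show that $J$ and $I_{Y_f}$ contain the same squarefree monomials. First suppose $x^\mult_D \in J$. Then $D \supseteq E$ for some section set $E$ over a non-face $C$. Hence $f(D) \supseteq C$ is a non-face, because the non-faces of $Y$ form an up-set. Conversely, suppose $f(D)=C$ is a non-face. Choosing one element of $D \cap A_b$ for each $b \in C$ gives a section set $E \sus D$, so $x^\mult_D \in J$. One may restrict to generators $y^\mult_C$, but since the non-faces form an up-set, ranging over all monomials of $I_Y$ is harmless.

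For part b, Theorem \ref{thm:AB-fp*}b says that $D$ is a non-face of $Y^f$ iff $\core_f(D)$ is a non-face of $Y$. By Lemma \ref{lem:AB-Y}b this holds iff $D \supseteq f^{-1}(C)$ for some non-face $C$ of $Y$, which in turn holds iff $x^\mult_D$ is divisible by $x^\mult_{f^{-1}(C)} = \prod_{b\in C} x^\mult_{A_b}$ with $y^\mult_C \in I_Y$. So the monomial ideal generated by these monomials has the same squarefree monomials as $I_{Y^f}$, and both ideals are squarefree, so they coincide.

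The only delicate point is in part a. There we must check that the ideal generated by the product ideals is indeed squarefree, which uses the disjointness of the fibers $A_b$. We must also check the choice of the section $E \sus D$, which uses the surjectivity of $f|_D$ onto $f(D)$. Everything else is a routine translation between faces and monomials.
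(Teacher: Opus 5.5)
Your proof is correct and follows the paper's route. You read off the non-faces of $Y_f$ and $Y^f$ from Theorem \ref{thm:AB-fp*} (with Lemma \ref{lem:AB-Y}b for part b) and compare squarefree monomials; your part b is essentially the paper's argument, and your explicit transversal-set argument for part a, which the paper leaves implicit, matches the reasoning the paper later uses for the ideal of $f^{\uu\uu}(X)$.
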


\begin{proof}
The statement on simplicial complexes follows by Theorem \ref{thm:AB-fp*}. 

To show the statement for ideals, let $Y$ correspond to the cut
$(\cC, \cV)$ of $\hhat{B}$. 
That $D \sus A$ is not in $Y^f$ means that
    $f^{-1}(C) \sus D$ for some $C \in \cV$. But this means that
    $x^\mult_D$ is in the ideal of $Y^f$ iff it is divisible by $\prod_{b \in C}
    x^\mult_{A_b}$ for some $y^\mult_C \in I_Y$. 
  \end{proof}

  \begin{remark} For a Stanley-Reisner ideal $I_Y$, taking a variable
    $x_b$ and replacing each occurrence in the minimal generators
    with a product of variables $x^\mult_{A_b}$ is a natural operation. But
    in textbooks or in the  literature it is hard to find explicit statements on
    how this changes the simplicial complex. The above explicitly gives
    this as $Y^f$ (although this is of course easy to work out by anyone
    needing it).
  \end{remark}

  \begin{example} \label{eks:surjU-join} Write
    $B = \{b_1, \ldots, b_m,\}$.
    Then $f^{*\uu}(\{\emptyset\}) = \{\emptyset \}$ and
    $f^{*\ii}(\{\emptyset\})$ is the join
    \begin{equation} \label{eq:surj-sisphere}
      X = \pdel(A_{b_1}) * \pdel(A_{b_2}) * \cdots * \pdel(A_{b_m}).
      \end{equation}
    (Here $Y = \{\emptyset\}$,
    $I_Y = (x_{b_1}, x_{b_2}, \ldots, x_{b_m})$ and
    $I_X = (x^\mult_{A_{b_1}}, \ldots, x^\mult_{A_{b_m}})$,
    confer the remark above.) 
    The join \eqref{eq:surj-sisphere}
    is a join of simplicial spheres, and is thus itself a
    simplicial sphere. The article \cite{Pol} shows its significance
    (and the present article its naturality):
    The full-dimensional  Cohen-Macaulay subcomplexes
    of this  join are shown to be constructible simplicial balls.
    Furthermore they are precisely the simplicial complexes whose
    Stanley-Reisner ideals are polarizations (in the most general sense,
    \cite{AFL})
    of Artin monomial ideals ${\mathfrak j} \sus k[y_B]$ such
    that ${\mathfrak j} \supseteq (y_{b_1}^{|A_{b_1}|}, \ldots, y_{b_m}^{|A_{b_m}|})$.
  \end{example}

  \begin{remark}
    The minimal free resolutions of $k[x_A]/I_{Y^f}$ is obtained from
    the minimal free resolution of $k[y_B]/I_Y$ by replacing
    each occurrence of a variable $x_b$ with a product $x^{A_b}$.
    In particular $I_Y$ and $I_{Y^f}$ have the same projective dimension.
  \end{remark}

  \begin{remark}
    The ideals $I_Y$ and $I_{Y_f}$ have the same regularity.
    If $Y = Z^\vee$ is the Alexander dual of $Z$, by Proposition \ref{pro:ad-ad}
    \[ (Z^\vee)_f = f^{*\uu}(Z^\vee) = f^{*\ii}(Z)^\vee = (Z^f)^\vee. \]
    Since Alexander duality transfers projective dimension to regularity,
    $I_{Z^\vee}$ and $I_{(Z^\vee)_f}$ have the same regularity.

    If $f : A \pil B$ is surjective with $f^{-1}(b) = \{a_1, \ldots, a_m\}$,
    and all other fibers of cardinality $1$, the minimal free resolution
    of $I_{Y_f}$ is obtained from that of $I_Y$, by replacing
    each term $S(-y_by_R)$ in the free resolution, by a Koszul complex
    \[ \oplus_{1 \leq i \leq m} S(-x_{a_i}) \vpil \oplus_{1 \leq i < j \leq m}
      S(-x_{a_i}x_{a_j}) \vpil \cdots ,  \]
    multiplied with $x_R$. We thank L.Br{\aa}then Knudsen for this
    observation.
    \end{remark}
  
\subsection{The functors $f^{\uu \uu}, f^{**}, f^{\ii \ii}$}

 \begin{proposition} Given a surjection $f : A \pil B$. 
    \begin{itemize}
    \item[a.] $f^{\uu \uu} : \hhat{A} \pil \hhat{B}$ sends $X$ on $A$
      to $Y$ on $B$ consisting of all $f(D)$ where $D \in X$. This is
      the smallest $Y$ in $\hhat{B}$ such that $Y_f \supseteq X$.
     
    \item[b.] $f^{**} : \hhat{A} \pil \hhat{B}$ sends $X$ to $Y$ consisting
      of $C \sus B$ such that $f^{-1}(C) \in X$. This is the
      largest $Y$ on $B$ such that $Y_f \sus X$.
      It is also the smallest $Y$ such that $Y^f \supseteq X$. 

    \item[c.] $f^{\ii \ii} : \hhat{A} \pil \hhat{B}$ sends $X$
      to $Y$ which is the largest $Y$ on $B$ such that
      $Y^f \sus X$. 
      \end{itemize}
    \end{proposition}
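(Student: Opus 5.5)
This is essentially a specialization of Theorem \ref{thm:AB-fpp} combined with Proposition \ref{pro:surj-fp*}. The plan is to read off the descriptions of the three functors from Theorem \ref{thm:AB-fpp} and translate the extremal characterizations using the adjunctions $f^{\uu\uu} \adj f^{*\uu} \adj f^{**} \adj f^{*\ii} \adj f^{\ii\ii}$ and Lemma \ref{lem:poset-maxmin}. Along the way we replace $f^{*\uu}(Y)$ by $Y_f$ and $f^{*\ii}(Y)$ by $Y^f$, as Proposition \ref{pro:surj-fp*} allows.

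For part a, Theorem \ref{thm:AB-fpp}a says $f^{\uu\uu}(X)$ is generated by the images $f(D)$ with $D \in X$. The set $\{f(D) : D \in X\}$ is already closed under subsets, so no generation step is needed: if $C \sus f(D)$, then $C = f(D \cap f^{-1}(C))$, and $D \cap f^{-1}(C)$ is a face of $X$. By Lemma \ref{lem:poset-maxmin}b applied to $f^{\uu\uu} \adj f^{*\uu}$, the complex $f^{\uu\uu}(X)$ is the smallest $Y$ with $f^{*\uu}(Y) \supseteq X$, that is, the smallest with $Y_f \supseteq X$.

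For part b, the description of $f^{**}(X)$ as the set of $C$ with $f^{-1}(C) \in X$ is Theorem \ref{thm:AB-fpp}b. The adjunction $f^{*\uu} \adj f^{**}$ gives, via Lemma \ref{lem:poset-maxmin}a, that $f^{**}(X)$ is the largest $Y$ with $Y_f = f^{*\uu}(Y) \sus X$. The adjunction $f^{**} \adj f^{*\ii}$ gives, via Lemma \ref{lem:poset-maxmin}b, that it is the smallest $Y$ with $Y^f = f^{*\ii}(Y) \supseteq X$. As a sanity check one can verify the first directly: $Y_f$ has facets $f^{-1}(S)$ for the facets $S$ of $Y$, so $Y_f \sus X$ holds exactly when every $f^{-1}(C)$ with $C \in Y$ lies in $X$.

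For part c, the adjunction $f^{*\ii} \adj f^{\ii\ii}$ and Lemma \ref{lem:poset-maxmin}a give that $f^{\ii\ii}(X)$ is the largest $Y$ with $Y^f = f^{*\ii}(Y) \sus X$. There is no real obstacle anywhere. The only points needing care are matching each adjunction to the correct half of Lemma \ref{lem:poset-maxmin} (which side is the left adjoint) and the small observation in part a that images of faces already form a simplicial complex, which uses surjectivity only implicitly: a subset of $f(D)$ is attained as an image inside $D$.
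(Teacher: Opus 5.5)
Your proposal is correct and follows the paper's own proof. The paper simply cites Theorem \ref{thm:AB-fpp} and Lemma \ref{lem:poset-maxmin}, with $f^{*\uu}(Y)=Y_f$ and $f^{*\ii}(Y)=Y^f$ coming from Proposition \ref{pro:surj-fp*}, and your matching of each adjunction to the correct half of the lemma is right. One small correction: the subset-closure argument $C = f(D\cap f^{-1}(C))$ in part a works for any map $f$, so surjectivity is really only used to identify $f^{*\uu}(Y)$ and $f^{*\ii}(Y)$ with $Y_f$ and $Y^f$.
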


    \begin{proof} 
      The statement follows by Theorem \ref{thm:AB-fpp} and Lemma
      \ref{lem:poset-maxmin}.
\end{proof}

If $x^\mult_D \in k[x_A]$ is a squarefree monomial, let its {\it $f$-core} be
the monomial $x^\mult_C \in k[y_B]$ where $C = \core_f(D)$.

\begin{corollary} Given a simplicial complex $Y$ on $B$.
  \begin{itemize}
\item[a.] For $Y = f^{\uu \uu}(X)$, the ideal $I_Y \sus k[y_B]$ of $Y$
  is generated by the monomials $y^\mult_C$ (for $C \sus B$) such that
  $\prod_{b \in C} (x^\pluss_{A_b})$ is in $I_X \sus k[x_A]$.

\item[b.] For $Y = f^{* *}(X)$, its ideal $I_Y \sus k[y_B]$ of $Y$ is
  generated by monomials $y^\mult_C$ such that the monomial
  $x^\mult_{f^{-1}(C)} = \prod_{b \in C} x^\mult_{A_b}$ is in $I_X \sus k[x_A]$.
\item[c.] For $Y = f^{\ii \ii}(X)$, its ideal $I_Y \sus k[y_B]$
is generated by all {\em $f$-cores} of monomials in $I_X \sus k[x_A]$.
\end{itemize}

\end{corollary}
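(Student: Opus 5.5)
Each part follows from the explicit descriptions of Theorem \ref{thm:AB-fpp} (or the preceding proposition for surjections), translated into Stanley-Reisner ideals. Throughout I use that $I_Y$ is spanned over $k$ by the squarefree monomials $y^\mult_C$ with $C$ a non-face of $Y$, together with their multiples. It is therefore enough to decide, for each $C\sus B$, when $C$ is a non-face of $Y$, and then to rewrite that condition as a membership in $I_X$.

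For part b, Theorem \ref{thm:AB-fpp}b says that $C$ is a face of $f^{**}(X)$ iff $f^{-1}(C)\in X$. So $C$ is a non-face iff $f^{-1}(C)\notin X$, i.e. iff $x^\mult_{f^{-1}(C)}=\prod_{b\in C}x^\mult_{A_b}$ lies in $I_X$. This part is immediate.

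For part a, Theorem \ref{thm:AB-fpp}a says that $C$ is a face of $f^{\uu\uu}(X)$ iff $C\sus f(D)$ for some face $D$. By surjectivity and downward closure, this holds iff $C=f(D)$ for some face $D$, and we may take $D$ to be a section $s(C)$ with one element from each $A_b$, $b\in C$. So $C$ is a non-face iff every transversal $\{a_b\}_{b\in C}$ with $a_b\in A_b$ is a non-face of $X$. Expanding $\prod_{b\in C}(x^\pluss_{A_b})$, this ideal is generated by exactly the monomials $\prod_{b\in C}x_{a_b}$ over such transversals, each squarefree since the fibers are disjoint. Hence the product ideal lies in $I_X$ iff every transversal is a non-face, which is the claim. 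This is the step needing most care: one must check that the product ideal is generated by transversal monomials, and that no non-squarefree or repeated-fiber terms appear; these cannot occur because the fibers are disjoint.

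For part c, I would use Theorem \ref{thm:AB-fpp}c, or rather its proof. Those $C$ with $C\supseteq\core_f(D)$ for some non-face $D$ of $X$ form exactly the non-faces of $f^{\ii\ii}(X)$. So $I_Y$ is generated by the $y^\mult_{\core_f(D)}$ with $x^\mult_D\in I_X$ squarefree, which is the statement for squarefree monomials. For an arbitrary monomial $m\in I_X$, I would define its $f$-core via its support and note that it is divisible by some squarefree $x^\mult_D\in I_X$. Since $\core_f$ is monotone, the core of $m$ is a multiple of the core of $D$ and so still lies in $I_Y$, so including all monomials changes nothing.
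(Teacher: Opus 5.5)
Your proof is correct and follows essentially the paper's argument. In each part you read off the non-faces of $Y$ from Theorem \ref{thm:AB-fpp} and translate using $x^\mult_E\in I_X \iff E\notin X$, and part a hinges, as in the paper, on $\prod_{b\in C}(x^\pluss_{A_b})$ being generated by the transversal monomials. The only differences are minor: for part c the paper just cites Lemma \ref{lem:AB-Y}b, while you use the more direct description of the non-faces of $f^{\ii\ii}(X)$ as supersets of cores of non-faces from the proof of Theorem \ref{thm:AB-fpp}c, and your extension to non-squarefree monomials is harmless.
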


\begin{proof}
a.  That $C \sus B$ is {\it not} in $Y$ means that there is no
$D \sus f^{-1}(C)$ mapping bijectively to $C$ with $D \in X$.
This means that $y^\mult_C \in I_Y$ iff every $x^\mult_D$ dividing
$x^\mult_{f^{-1}(C)}$ and
with $D$ mapping bijectively to $C$, is in $I_X$. But such $x^\mult_D$
are precisely the monomials in the ideal $\prod_{b \in C} (x^+_{A_b})$.

b.  That $C \sus B$ is {\it not} in $Y$ means that $f^{-1}(C)$ is {\it not}
in $X$. Hence $y^\mult_C \in I_Y$ iff $x^\mult_{f^{-1}(C)}$ is in $I_X$.

c. This is by Lemma \ref{lem:AB-Y}b.
\end{proof}


By Lemma \ref{lem:adj-fiber} we get the following:

\begin{corollary} Given a simplicial complex $Y$ on $B$. 
  \begin{itemize}
  \item[i.] The simplicial complexes $X$ on $A$
    such that $f^{**}(X) = Y$ are precisely the $X$ such
  that $Y_f \sus X \sus Y^f$.    
\item[ii.] $X = Y_f$ is the unique maximal simplicial complex on $A$ such
  that $f^{\uu \uu}(X) = Y$.
\item[iii.] $X = Y^f$ is the unique minimal simplicial complex on $A$ such
  that $f^{\ii \ii}(X) = Y$.
  \end{itemize}
\end{corollary}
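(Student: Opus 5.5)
The plan is to read all three parts off the five-sequence of adjoints
\[ f^{\uu \uu} \adj f^{* \uu} \adj f^{* *} \adj f^{* \ii} \adj f^{\ii \ii}, \]
using Lemma \ref{lem:poset-maxmin}, Corollary \ref{lem:adj-fiber}, and the identifications $f^{*\uu}(Y) = Y_f$ and $f^{*\ii}(Y) = Y^f$ from Proposition \ref{pro:surj-fp*}.

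For part i, I will apply Corollary \ref{lem:adj-fiber} to the middle triple $f^{*\uu} \adj f^{**} \adj f^{*\ii}$. Here $f^{**} : \hhA \pil \hhB$ plays the role of ``$f^*$'', with left adjoint $f^{*\uu}$ and right adjoint $f^{*\ii}$. The corollary then says that the $X$ with $f^{**}(X) = Y$ are exactly those with $f^{*\uu}(Y) \sus X \sus f^{*\ii}(Y)$. By Proposition \ref{pro:surj-fp*} this is the interval $Y_f \sus X \sus Y^f$. Nothing else is needed.

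For part ii, I will use the adjunction $f^{\uu\uu} \adj f^{*\uu}$ together with Lemma \ref{lem:poset-maxmin}a. This gives that the largest $X$ with $f^{\uu\uu}(X) \sus Y$ is $f^{*\uu}(Y) = Y_f$. Consequently every $X$ with $f^{\uu\uu}(X) = Y$ lies inside $Y_f$. It remains to check that $Y_f$ itself is attained, i.e.\ $f^{\uu\uu}(Y_f) = Y$, and this is where surjectivity enters. The inclusion $\sus$ is the counit. For $\supseteq$, take a face $S$ of $Y$. Then $f^{-1}(S)$ is a face of $Y_f$, and $f(f^{-1}(S)) = S$ because $f$ is surjective. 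So $S \in f^{\uu\uu}(Y_f)$ by Theorem \ref{thm:AB-fpp}a.

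For part iii, I will argue dually using $f^{*\ii} \adj f^{\ii\ii}$ and Lemma \ref{lem:poset-maxmin}b. This gives that the smallest $X$ with $f^{\ii\ii}(X) \supseteq Y$ is $f^{*\ii}(Y) = Y^f$, so any $X$ with $f^{\ii\ii}(X) = Y$ contains $Y^f$. Again I must check that $f^{\ii\ii}(Y^f) = Y$, where $\supseteq$ is the unit. For $\sus$, let $C \in f^{\ii\ii}(Y^f)$. By Theorem \ref{thm:AB-fpp}c, every $D$ with $\core_f(D) = C$ lies in $Y^f$. Take $D = f^{-1}(C)$; since every fiber is nonempty, $\core_f(f^{-1}(C)) = C$. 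Then $D \in Y^f$ means by definition that $\core_f(D) = C$ is a face of $Y$.

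The only non-formal step is this verification that the extremal complexes are actually attained. Both checks rely on $f$ being surjective, and I expect them to be short. As an alternative for part iii, one could deduce it from part ii by Alexander duality via Proposition \ref{pro:ad-ad}, but the direct check seems simpler.
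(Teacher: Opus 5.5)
Your proof is correct and follows the paper's route. Part i is exactly the paper's argument: Corollary \ref{lem:adj-fiber} applied to the middle triple $f^{*\uu} \adj f^{**} \adj f^{*\ii}$, combined with Proposition \ref{pro:surj-fp*}. For parts ii and iii the paper only cites that same corollary, which does not literally apply because $f^{\uu\uu}$ and $f^{\ii\ii}$ sit at the two ends of the five-sequence. Your use of Lemma \ref{lem:poset-maxmin}, together with the surjectivity checks $f^{\uu\uu}(Y_f)=Y$ and $f^{\ii\ii}(Y^f)=Y$, supplies exactly the step that citation leaves implicit.
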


\subsection{Simplicial complexes from sections}

For a simplicial complex $X$ on $A$, its {\it support} is the
set of elements $a$ of $A$ which are vertices of $X$, i.e $\{a\}$ is
face of $X$. 
The {\it co-support} of $X$ is the set of vertices $a$ of $A$ such
that $A \backslash \{a\}$ is a face of $X$.

\medskip For a simplicial complex $Y$ on $B$, we have seen
how  $f^{*p}(Y)$, with $p = \uu, \iig$
represents extremal elements for the functors $f^{\uu \uu}, f^{**}, f^{\ii \ii}$. 
In particular $X = f^{*\uu}(Y) = Y_f$ is a unique
maximal element with $f^{\uu \uu}(X) = Y$.
However there is no unique minimal element mapping  to $Y$ by $f^{\uu \uu }$.

The following however describes a class of such.
We take a section $s : B \pil A$ of the surjection $f : A \pil B$,
so $f \circ s = \id_B$. Given a simplicial complex $Y$ on $B$,
let $Y_{s(B)}$ be its transferal to $s(B)$ via the bijection
$s : B \pil s(B)$.

\begin{proposition} Consider sections $s : B \pil A$ of the surjection
  $f : A \pil B$.
\begin{itemize}
\item[a.] The $X = s^{\uu \uu}(Y) = Y_{s(B)}$ are the $X$ on $A$
  with minimal support such that $f^{\uu \uu}(X) = Y$.
  The faces of $X$ and $Y$ are in bijection.
\item[b.] For $X = s^{**}(Y) = \cone_{A \backslash s(B)}(Y_{s(B)})$ we
  have $Y_f \sus X \sus Y^f$. For these  $X$ the facets of $X$ and $Y$
  are in bijection, and similarly the co-facets are in bijection. 
  \item[c.]  The 
$X = s^{\ii \ii}(Y) = \cone_{A \backslash s(B)}( Y_{s(B)}) \cup
  \cone_{s(B)}(\pdel(A \backslash s(B)))$ 
  are the $X$ on $A$ with maximal co-support
  such that $f^{\ii \ii}(X) = Y$.
  The non-faces of $X$ and $Y$ are in bijection.
  \end{itemize}
\end{proposition}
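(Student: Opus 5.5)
The plan is to treat $s : B \pil A$ as an injection and use the explicit descriptions from Section \ref{sec:inj}. Identify $B$ with $s(B) \sus A$ and put $E = A \backslash s(B)$. Then $s^{\uu\uu}(Y) = Y_{s(B)}$, $s^{**}(Y) = \cone_E(Y_{s(B)})$ and $s^{\ii\ii}(Y) = \cone_E(Y_{s(B)}) \cup \cone_{s(B)}(\pdel(E))$. The bijection claims (faces, facets, co-facets, non-faces) should be read off from these descriptions. The remaining extremal and sandwich claims will follow from functoriality $f^{pp}\circ s^{pp} = (f\circ s)^{pp} = \id$ together with Lemma \ref{lem:adj-fiber}.

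\textbf{Step 1 (functoriality).} Since $f^{\uu} \circ s^{\uu} = (f\circ s)^\uu$ and $s^* \circ f^* = (f \circ s)^*$, taking adjoints again gives $(fs)^{\uu\uu} = f^{\uu\uu}s^{\uu\uu}$, and similarly for $**$ and $\ii\ii$. All of these are the identity because $f \circ s = \id_B$. Hence:
\begin{itemize}
\item $f^{\uu\uu}(s^{\uu\uu}Y) = Y$;
\item $f^{**}(s^{**}Y) = Y$, which by the earlier corollary gives $Y_f \sus s^{**}(Y) \sus Y^f$ (part b);
\item $f^{\ii\ii}(s^{\ii\ii}Y) = Y$.
\end{itemize}

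\textbf{Step 2 (bijections).} For $Y_{s(B)}$ the faces correspond to those of $Y$ via $s$. The cone $\cone_E(Y_{s(B)})$ has facets $s(F) \cup E$ for $F$ a facet of $Y$. Its co-facets are exactly the $s(N)$ for $N$ a co-facet of $Y$, since its ideal is generated by $I_Y$ transported. For $s^{\ii\ii}(Y)$ the Stanley-Reisner ideal is $I_{Y_{s(B)}} \cdot (x^\mult_E)$, so its non-faces are exactly the sets $s(N) \cup E \cup T$ with $N$ a non-face of $Y$ and $T \sus A\backslash(s(N)\cup E)$. This needs care: such non-faces are not literally in bijection with those of $Y$. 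I would instead interpret the bijection via cores, $D \mapsto \core_f(D)$, or via minimal non-faces $N \mapsto s(N)\cup E$. I expect to settle on the co-facet version for consistency with the ideal.

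\textbf{Step 3 (minimal support in a).} Let $X$ satisfy $f^{\uu\uu}(X) = Y$. Every vertex $b$ of $Y$ equals $f(D)$ for some face $D$, and that face contains a vertex in $A_b$. So $\mathrm{supp}(X)$ meets every $A_b$ with $b$ in the support of $Y$. Hence a minimal support has exactly one element per such fiber, i.e. $\mathrm{supp}(X) \sus s(B)$ for some section $s$. Then $f$ restricted to the support is injective, so $X$ is the transfer of $f^{\uu\uu}(X) = Y$, i.e. $X = Y_{s(B)}$. Minimality means minimal under inclusion; fibers over non-vertices of $Y$ are handled by any choice of $s$.

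\textbf{Step 4 (maximal co-support in c).} This is the main obstacle. I would get it by Alexander duality: Proposition \ref{pro:ad-ad} gives $f^{\ii\ii}(X)^\vee = f^{\uu\uu}(X^\vee)$ and similarly $s^{\ii\ii}(Y) = s^{\uu\uu}(Y^\vee)^\vee$. Co-support of $X$ is the complement of the support of $X^\vee$, since $A\backslash\{a\} \in X$ iff $\{a\} \notin X^\vee$. So maximal co-support of $X$ corresponds to minimal support of $X^\vee$, and part c reduces to part a applied to $Y^\vee$. The same duality transports the face bijection of (a) to a non-face bijection in (c), which resolves the interpretation issue in Step 2.
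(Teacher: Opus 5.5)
Your proposal is correct and follows the same route as the paper: functoriality of the double adjoints with $f\circ s=\id_B$ for (a) and (b) (plus Lemma \ref{lem:adj-fiber} for the sandwich $Y_f\sus X\sus Y^f$), a support count for (a), and Alexander duality (Proposition \ref{pro:ad-ad}) to carry (a) over to (c). Your treatment of fibers over non-vertices of $Y$ is in fact more careful than the paper's count $|B|$.

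One correction to Step 2. The non-faces of $Y$ form an up-set, so the non-faces of $s^{\ii\ii}(Y)$ are exactly the sets $s(M)\cup E$ with $M$ a non-face of $Y$. Hence $M\mapsto s(M)\cup E$, with inverse $\core_f$, is already a literal bijection, and no reinterpretation as a co-facet statement is needed.
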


\begin{proof}
  For part a., it is clear that $f^{\uu \uu}(X) = Y$, due to functoriality
  and $f \circ s = \id_B$. These $X$ also have the minimal possible
  support cardinality of such $X$, which is $|B|$. If an $X$ mapping by
  $f^{\uu \uu}$ to $Y$
  has such support cardinality the support must be $s(B)$ for a section,
  and it is clear that $X = s^{\uu \uu}(Y)$.

  Part b. is clear. 
  The statement of part c. follows by a. and applying Alexander duality
  Proposition \ref{pro:ad-ad}.
\end{proof}

\medskip
\begin{remark}
  Relating to observation b. above, often there will be many more
  $Y_f \sus X \sus Y^f$ such that the facets of $X$ and $Y$ are in bijection
  and the co-facets of $X$ and $Y$ are in bijection.

  One large such class of $X$ are those that are successive {\it separations}
  of $Y$, \cite{AFL}. In this class for each co-facet $N$ of $X$
  we have a bijection $N \pil f(N)$ to the corresponding co-facet of $Y$.
  However there are also $X$ giving facet and co-facet bijections
  such that corresponding co-facets may have distinct cardinalities,
  see the following example.
\end{remark}

\begin{example}
  Let $B = \{a,b,x,y,r\}$ and $A = \{a,b,x,y,r_1, r_2\}$ with the
  natural map $A \pil B$.

  Let $Y$ be the simplicial complex with:
  \begin{itemize}
    \item Facets:
  $\{a,x,y\}, \, \{b,x,y\}, \, \{r,x\}, \, \{r,y\}$.
  \item Its co-facets are then:
  $\{a,r\}, \, \{b,r\}, \, \{a,b\}, \, \{r,x,y\}$.
\end{itemize}
 
Let $X$ be the simplicial complex with:
\begin{itemize}
\item Facets: $\{a,r_1, x,y\}, \, \{b,r_1,x,y\}, \, \{r_1,r_2,x\},
  \, \{r_1, r_2, y\}$.
  \item Its co-facets are then:
  $ \{a,r_2\}, \, \{b,r_1\}, \, \{a,b\}, \, \{r_1,r_2,x,y\}$.
\end{itemize}

  Then $Y = \core_f X$, and both facets and co-facets are in bijection.
\end{example}

\section{Products of simplicial complexes}
\label{sec:prod}

Given a simplicial complex $X$ on $A$ and $Y$ on $B$, we discuss
various products of these simplicial complexes on respectively
the union $A \cup B$ and the product $A \times B$. 

\subsection{Products on the union $A \cup B$}
For disjoint sets $A$ and $B$, the union $A \cup B$ is the
categorical coproduct, and comes with inclusions
\[ i_A : A \pil A \cup B, \quad i_B : B \pil A \cup B. \]
This gives maps
\[i_A^{\uu \uu}, i_A^{* *}, i_A^{\ii \ii} : \hhA \lpil (A \cup B)^{\hhat{}},
\quad i_B^{\uu \uu}, i_B^{* *}, i_B^{\ii \ii} : \hhB \lpil (A \cup B)^{\hhat{}}.
\]
For simplicial complexes $X$ on $A$ and $Y$ on $B$, we may apply
these maps and get simplicial complexes on $A \cup B$. Taking
the join or meet of these simplicial complexes we 
get various products of $X$ and $Y$, which will be simplicial complexes
on $A \cup B$.

\begin{theorem} Let $X$ be a simplicial complex on $A$, and $Y$ on $B$. 
\begin{itemize}
\item[a.] The join of $i_A^{\uu \uu}(X)$ and $i_B^{\uu \uu}(Y)$ is
the disjoint union of the two simplicial complexes. It consists
of $C \sus A \cup B$ such that $C \in X$ \underline{or} $C \in Y$.
(Their meet is $\{\emptyset \}$ if $X,Y \neq \emptyset$.) 
\item[b.] The meet of $i_A^{* *}(X)$ and $i_B^{* *}(Y)$ is
the external join $X * Y$. It  consists of $C \sus A \cup B$ such that
$C \cap A \in X$ {\underline{and}} $C \cap B  \in Y$.
\item[c.] The join of $i_A^{* *}(X)$ and $i_B^{* *}(Y)$
  consists of $C \sus A \cup B$ such that
$C \cap A \in X$ {\underline{or}}  $C \cap B  \in Y$. 
\item[d.] The meet of  $i_A^{\ii \ii}(X)$ and $i_B^{\ii \ii}(Y)$ consists
  of $C \sus A \cup B$ such that either:
  \begin{itemize}
  \item[i.] $C \cap A \subsetneq A$ and $C \cap B \subsetneq B$,
  \item[ii.] $C \cap B = B$ and $C \cap A \in X$,
  \item[iii.] $C \cap A = A$ and $C \cap B \in Y$.
  \end{itemize}
(Their join is the boundary $\pdel (A\cup B)$ when $X \neq \Delta(A)$ and
  $Y \neq \Delta(B)$.) 
\end{itemize}
\end{theorem}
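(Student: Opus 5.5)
The plan is to apply the explicit descriptions for inclusions from Section \ref{sec:inj} to $i_A : A \sus A\cup B$ (with complement $E = B$) and to $i_B$ (with complement $A$). Then each part reduces to elementwise set logic, since join and meet in $\hhat{(A\cup B)}$ are union and intersection of families of faces.

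The first step records the three descriptions for $i_A$:
\begin{itemize}
\item[$\cdot$] $i_A^{\uu\uu}(X)$ consists of the $C\sus A$ with $C\in X$;
\item[$\cdot$] $i_A^{**}(X)=\cone_B(X)$ consists of the $C$ with $C\cap A\in X$;
\item[$\cdot$] $i_A^{\ii\ii}(X)=\cone_B(X)\cup\cone_A(\pdel(B))$ consists of the $C$ with $C\cap A\in X$ or $C\cap B\subsetneq B$.
\end{itemize}
The same holds for $i_B$ with the roles of $A,B$ swapped. This last description of $i^{\ii\ii}$ follows directly from the proposition on injections, or alternatively from Theorem \ref{thm:AB-fpp}c, since $\core_{i_A}(D)=D\cup B$.

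For a., the union consists of the $C$ that lie in $A$ and belong to $X$, or lie in $B$ and belong to $Y$. This is the disjoint union. The intersection contains only $C\sus A\cap B=\emptyset$, so it is $\{\emptyset\}$ when both $X$ and $Y$ are nonempty. For b. and c., intersecting or uniting the two cone descriptions gives ``and'' and ``or'' respectively, and b. is the usual join $X*Y$. For d., intersect the conditions $(C\cap A\in X$ or $C\cap B\neq B)$ and $(C\cap B\in Y$ or $C\cap A\neq A)$, and split into cases on whether $C\cap A=A$ and whether $C\cap B=B$:
\begin{itemize}
\item[$\cdot$] If neither is full, we get case i.
\item[$\cdot$] If $C\cap B=B$ and $C\cap A\neq A$, the condition becomes $C\cap A\in X$, which is case ii.
\item[$\cdot$] Symmetrically we get case iii.
\item[$\cdot$] If both are full, the condition is $A\in X$ and $B\in Y$. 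Here $C\cap A=A\in X$, so this $C$ is already covered by ii (and by iii).
\end{itemize}

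For the parenthetical claim about the join in d., note that $C=A\cup B$ satisfies neither condition when $A\notin X$ and $B\notin Y$. Every proper $C$ has $C\cap A\neq A$ or $C\cap B\neq B$, so it lies in one of the two complexes. Hence the join is $\pdel(A\cup B)$.

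The only step that needs care is the case analysis in d., in particular confirming that the overlap case ($C\cap A=A$, $C\cap B=B$) is absorbed by ii/iii. The rest is immediate once the descriptions of the inclusion functors are in place.
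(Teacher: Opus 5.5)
Your approach is the paper's. Its proof just cites $i_A^{**}(X)=\cone_B(X)$ and $i_A^{\ii\ii}(X)=\cone_B(X)\cup\cone_A(\pdel(B))$ and their analogues for $i_B$. Your descriptions of the three complexes are correct, and so are parts a, b, c and the parenthetical claim about the join in d.

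\textbf{The gap.} It is in the last bullet of your case analysis for d. You prove only one inclusion there: if $C=A\cup B$ lies in the meet, then $A\in X$ and $B\in Y$, so $C$ satisfies ii. The converse is what fails. For $C=A\cup B$, condition ii only asks that $A\in X$. But membership in $i_B^{\ii\ii}(Y)$ requires $B\in Y$, because $C\cap A=A$ is not a proper subset of $A$. So when exactly one of $X=\Delta(A)$, $Y=\Delta(B)$ holds, the family described by i, ii, iii contains $A\cup B$ while the meet does not.

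Concretely, take $A=\{a\}$, $B=\{b\}$, $X=\Delta(A)$, $Y=\{\emptyset\}$. Then $i_A^{\ii\ii}(X)=\Delta(\{a,b\})$ and $i_B^{\ii\ii}(Y)=\{\emptyset,\{a\},\{b\}\}$. The meet is therefore $\{\emptyset,\{a\},\{b\}\}$, yet $\{a,b\}$ satisfies ii. The i/ii/iii family need not even be a simplicial complex: if $X=\Delta(A)$ and $B'\subsetneq B$ is a non-face of $Y$, it contains $A\cup B$ but not $A\cup B'$.

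\textbf{What your argument actually proves.} Part d cannot be proved as literally stated, and the paper's one-line proof passes over the same point. Your case analysis does establish a corrected version. $C$ lies in the meet iff ($C\cap B=B\Rightarrow C\cap A\in X$) and ($C\cap A=A\Rightarrow C\cap B\in Y$). Equivalently, $C$ satisfies one of:
\begin{itemize}
\item[(1)] condition i;
\item[(2)] condition ii together with $C\cap A\subsetneq A$;
\item[(3)] condition iii together with $C\cap B\subsetneq B$;
\item[(4)] $C=A\cup B$ with both $A\in X$ and $B\in Y$.
\end{itemize}
This agrees with the stated i/ii/iii exactly when $A\in X\Leftrightarrow B\in Y$. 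You should either state that restriction or amend ii and iii in this way, rather than claim the overlap case is absorbed.
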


\begin{proof}
  a. is clear since $i_A^{\uu \uu}(X)$ is just $X$ considered as
  a simplicial complex on $A \cup B$, with the vertices of $B$ as
  ``ghost vertices'',

\noindent b. and c. follow from 
  $i_A^{**}(X) = \cone_B(X)$ and $i_B^{**}(X) = \cone_A(Y)$,

\noindent d. follows from $i_A^{\ii \ii}(X)
  = \cone_B(X) \cup \cone_A(\pdel(B))$ and similarly for $i_B^{\ii \ii}(Y)$.
\end{proof}

 For the Stanley-Reisner ideals in $k[x_{A \cup B}]$ we have the following:

 \begin{corollary} Following the cases above, the Stanley-Reisner ideals
   are as follows:
\begin{itemize}
\item[a.] $(I_X) + (I_Y) + (x^+_A) \cdot (y^+_B) =
  (I_X) + (I_Y) + (x_a y_b)_{a \in A, b \in B}.$
\item[b.] $(I_X) + (I_Y)$,
\item[c.] $(I_X) \cap (I_Y) = (I_XI_Y)$,
\item[d.] $(I_X \cdot y^\mult_B) + (x^\mult_A \cdot I_Y)$.
\end{itemize}
\end{corollary}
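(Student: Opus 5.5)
The plan is to compute each Stanley-Reisner ideal directly from the descriptions of the simplicial complexes in the preceding theorem, using two basic facts. First, the Stanley-Reisner ideal of a union of complexes on the same vertex set is the intersection of their ideals. Second, the ideal of an intersection (meet) of complexes is the sum of their ideals, since a monomial $x^\mult_C$ lies in $I_{Z_1 \cap Z_2}$ iff $C \notin Z_1$ or $C \notin Z_2$. Here "join" in $\hhat{(A\cup B)}$ means union and "meet" means intersection. Throughout I write the variables of $k[x_{A\cup B}]$ as $x_a$ for $a\in A$ and $y_b$ for $b\in B$, and $(I_X)$, $(I_Y)$ denote extended ideals.

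\textbf{Case b.} By the proof of the theorem, $i_A^{**}(X)=\cone_B(X)$, whose ideal is $(I_X)$ (the injection proposition in Section \ref{sec:inj}). Similarly the ideal of $i_B^{**}(Y)$ is $(I_Y)$. Their meet therefore has ideal $(I_X)+(I_Y)$.

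\textbf{Case c.} The join has ideal $(I_X)\cap(I_Y)$. Since $I_X$ and $I_Y$ are monomial ideals in disjoint sets of variables, the intersection of two generators $m\in I_X$, $n\in I_Y$ is generated by $\lcm(m,n)=mn$. Hence $(I_X)\cap(I_Y)=(I_XI_Y)$. This is the one point that needs a short argument; I would note that monomial ideal intersections are generated by pairwise lcms.

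\textbf{Cases a and d.} For a, the ideal of $i_A^{\uu\uu}(X)$ is $(I_X)+(y^\mult_B)$ by the injection proposition, and similarly that of $i_B^{\uu\uu}(Y)$ is $(I_Y)+(x^\mult_A)$. The intersection of these two ideals could be expanded, but it is cleaner to read off the non-faces directly from part a of the theorem. A set $C$ is a non-face iff it is not contained in a face of $X$ on $A$ and not a face of $Y$ on $B$. That happens iff either $C$ meets both $A$ and $B$ (so $C$ contains some $\{a,b\}$), or $C\sus A$ with $C\notin X$, or $C\sus B$ with $C\notin Y$. This gives exactly $(I_X)+(I_Y)+(x_ay_b)$, and $(x^+_A)(y^+_B)$ is visibly the ideal generated by the $x_ay_b$.

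For d, the ideal of $i_A^{\ii\ii}(X)$ is $(I_X)\cdot(y^\mult_B)$ and that of $i_B^{\ii\ii}(Y)$ is $(I_Y)\cdot(x^\mult_A)$, both by the injection proposition. The meet then has ideal equal to their sum, which is the stated ideal. The main obstacle is only bookkeeping in case a. I would verify it through the non-face description rather than expanding $((I_X)+(y^\mult_B))\cap((I_Y)+(x^\mult_A))$. One point to check there is that the product $x^\mult_A y^\mult_B$ term is absorbed by the $x_ay_b$ generators.
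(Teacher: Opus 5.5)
Your derivations of (b), (c), (d) are correct and your argument for (a) stands, but your route differs from the paper's, and one claim in (a) is false.

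\textbf{Comparison of routes.} The paper writes out only (d). It reads the non-faces off the combinatorial description in the preceding theorem (a non-face $D\cup C$ has $D=A$ and $C\notin Y$, or $C=B$ and $D\notin X$), translates them into monomials, and declares the other cases similar. For (b), (c), (d) you work on the ideal side instead. The map $Z\mapsto I_Z$ turns meets into sums and joins into intersections, and you plug in the ideals of $i^{**}$ and $i^{\ii\ii}$ from Section~\ref{sec:inj}, plus the lcm argument for (c).

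For (d) your route is the more robust one. It gives both inclusions at once and never uses part d of the theorem, whose description is off at $C=A\cup B$. That description puts $A\cup B$ in the meet as soon as $A\in X$ or $B\in Y$, whereas the meet contains it only when both hold. For example, with $A=\{a\}$, $B=\{b\}$, $X=\Delta(A)$, $Y=\{\emptyset\}$, the meet is $\{\emptyset,\{a\},\{b\}\}$. The stated ideal $(I_X\cdot y^\mult_B)+(x^\mult_A\cdot I_Y)$ is correct, and your sum-of-ideals computation reaches it directly. The paper's approach, in exchange, is uniform across the four cases and stays purely combinatorial.

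\textbf{The false claim in (a).} The ideal of $i_A^{\uu\uu}(X)$ is not $(I_X)+(y^\mult_B)$ once $|B|\ge 2$. Here $i_A^{\uu\uu}(X)$ is $X$ with the vertices of $B$ as ghost vertices, so every $\{b\}$ is a non-face and the ideal is $(I_X)+(y^+_B)$. The proposition in Section~\ref{sec:inj} prints $(x^\mult_E)$ at this point; it should read $(x^+_E)$, the notation introduced just before it.

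With the ideals you wrote, the intersection route would give a wrong answer. If $|B|\ge 2$ and $\{a\}\in X$, then $x_ay_b\notin (I_X)+(y^\mult_B)$, although $\{a,b\}$ is a non-face of the join. So the worry in your last sentence is misplaced: with those ideals the two computations genuinely disagree. Your actual argument for (a), reading off the non-faces, does not depend on this claim and is fine.

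With the correct ideals the intersection route also works, because monomial ideals form a distributive lattice:
$((I_X)+(y^+_B))\cap((I_Y)+(x^+_A))=(I_X)+(I_Y)+(x^+_A)(y^+_B)$.
This uses $(I_X)\cap(x^+_A)=(I_X)$ and $(I_Y)\cap(y^+_B)=(I_Y)$, which hold exactly when $\emptyset\in X$ and $\emptyset\in Y$.

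That exposes a tacit hypothesis in (a). If $X$ is the void complex and $Y=\{\emptyset\}$, the join is $\{\emptyset\}$, while the stated ideal is the unit ideal. Your non-face list misclassifies $C=\emptyset$ in precisely this situation. The paper's statement carries the same tacit assumption that $X$ and $Y$ are nonvoid.
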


\begin{proof} We do the last case d., as the others are similar.
  If $D \cup C \sus A \cup B$ (with $D \sus A$ and $C \sus B$)
  is not in the simplicial complex, then
  either $D= A$ and $C \not \in Y$, which means
  that $x^\mult_A y^\mult_C \in x^\mult_A \cdot I_Y$, or 
$C = B$ and $D \not \in X$, which means
that $x^\mult_D y^\mult_B  \in I_X \cdot y^\mult_B$.
\end{proof}

\subsection{Products on the cartesian product $A \times B$}
The cartesian product $A \times B$ is the categorical product in
the category of sets. The projection maps $p_A : A \times B \pil A$
and $p_B : A \times B \pil B$ induce maps:
\[ p_A^{\uu *}, p_A^{\ii *} : \hhA \pil (A \times B)^{\hhat{}}, \quad
  p_B^{\uu *}, p_B^{\ii *} : \hhB \pil (A \times B)^{\hhat{}}.
\]

For simplicial complexes $X$ on $A$ and $Y$ on $B$, we apply
these maps and get simplicial complexes on $A \times B$. Taking
the join or meet of these simplicial complexes we
get various products of $X$ and $Y$, which will be simplicial complexes
on $A \times B$. (For some of the terminology below, see Subsection
\ref{subsec:adj-term}.)

\begin{theorem}  Let $X$ be a simplicial complex on $A$, and $Y$ on $B$. 
  \begin{itemize}
  \item[a.] The meet of $p_A^{\uu *}(X)$ and $p_B^{\uu *}(Y)$ is the
    simplicial complex on $A \times B$ generated by the faces
    $D \times C$ where $D \in X$ {\underline{and}} $C \in Y$.
  \item[b.] The join of  $p_A^{\uu *}(X)$ and $p_B^{\uu *}(Y)$ is the
    simplicial complex on $A \times B$ generated by the faces
    $D \times C$ where $D \in X$ {\underline{or}} $C \in Y$.
   \item[c.] The meet of  $p_A^{\ii *}(X)$ and $p_B^{\ii *}(Y)$ is the
     simplicial complex on $A \times B$ whose  non-face complements
     is generated by $U \times V$ where $U$ is a non-face complement
     of $X$ {\underline{or}} $V$ is a non-face complement of $Y$.
     (By ``generated'' we mean taking subsets.)
 \item[d.] The join of  $p_A^{\ii *}(X)$ and $p_B^{\ii *}(Y)$ is the
     simplicial complex on $A \times B$ whose non-face complements
     is generated by $U \times V$ where $U$ is a non-face complement
     of $X$ {\underline{and}} $V$ is a non-face complement of $Y$. 
   \end{itemize}
 \end{theorem}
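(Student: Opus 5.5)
The plan is to describe $p_A^{\uu *}(X)$ and $p_B^{\uu *}(Y)$ explicitly via Theorem \ref{thm:AB-fp*}, and then compute meets and joins. In the poset $\hhat{(A\times B)}$ the meet is intersection of simplicial complexes and the join is union. So everything reduces to describing the faces, or the non-faces, of each pulled-back complex.

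By Theorem \ref{thm:AB-fp*}a, $p_A^{\uu *}(X)$ consists of all $T \sus A\times B$ with $p_A(T) \in X$. Any such $T$ lies in $p_A(T) \times B$. Hence $p_A^{\uu *}(X)$ is generated by the sets $D \times B$ with $D\in X$, and likewise $p_B^{\uu *}(Y)$ is generated by the $A \times C$ with $C \in Y$.

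\textbf{Part a.} A set $T$ lies in the meet iff $p_A(T)\in X$ and $p_B(T)\in Y$. Since $T \sus p_A(T)\times p_B(T)$, and conversely every subset $T$ of some $D\times C$ with $D\in X$, $C\in Y$ has $p_A(T)\sus D$ and $p_B(T)\sus C$, the meet is generated by these $D \times C$.

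\textbf{Part b.} The union is generated by the $D\times B$ with $D\in X$ together with the $A\times C$ with $C\in Y$. This is the same complex as the one generated by all $D\times C$ with $D\in X$ or $C\in Y$. Indeed, each $D\times B$ and $A\times C$ is of that form, and each such $D\times C$ sits inside $D\times B$ or $A\times C$. (Here I read ``or'' with $D\sus A$, $C\sus B$ arbitrary.)

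\textbf{Parts c and d.} These are dual, and I would handle them via complements. By Theorem \ref{thm:AB-fp*}b, $T$ is a non-face of $p_A^{\ii *}(X)$ iff $\core_{p_A}(T)$ is a non-face of $X$. Now $\core_{p_A}(T)=\{a : \{a\}\times B\sus T\}$. Writing $T$ as the complement of $W$, this equals $A\setminus p_A(W)$. So $W$ is a non-face complement of $p_A^{\ii *}(X)$ iff $p_A(W)$ is a non-face complement $U$ of $X$.

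Non-face complements are closed under subsets: non-faces are closed upward, so their complements are closed downward. Hence the non-face complements of $p_A^{\ii *}(X)$ are generated by the sets $U\times B$, and those of $p_B^{\ii *}(Y)$ by the sets $A\times V$. The remaining step is how meet and join act on non-faces.
\begin{itemize}
\item For the meet (intersection of complexes), the non-faces form the union of the two families of non-faces. Repeating the argument of part b then gives generators $U\times V$ with $U$ or $V$ a non-face complement.
\item For the join (union of complexes), the non-faces form the intersection of the two families. This is exactly the situation of part a, using $W\sus p_A(W)\times p_B(W)$.
\end{itemize}
Equivalently, c and d are a and b transported through Proposition \ref{pro:ad-ad}, since $p^{\uu *}(Y^\vee)=p^{\ii *}(Y)^\vee$ and Alexander duality exchanges meet and join.

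The main obstacle is keeping the bookkeeping straight in c and d. One must check that the core computation really yields $A\setminus p_A(W)$, and that the downward closure of non-face complements is used correctly, so that ``generated'' means taking subsets. The rest is routine.
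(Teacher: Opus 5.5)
Your proposal is correct and follows the paper's proof. The paper likewise only observes that $p_A^{\uu *}(X)$ is generated by the faces $D \times B$ with $D \in X$, and that $p_A^{\ii *}(X)$ has non-face complements generated by the $U \times B$ with $U$ a non-face complement of $X$; it leaves both the meet/join bookkeeping and the identity $\core_{p_A}(T) = A \setminus p_A(W)$ (for $W$ the complement of $T$) implicit, and you supply both.
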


 \begin{proof}
a. and b. $p_A^{\uu *}(X)$ is generated by faces $C \times B$, such
   that $C \in X$, and similarly for $p_B^{\uu *}(Y)$.

   c. and d. $p_A^{\ii *}(X)$ has non-face complements generated
   by $U \times B$, where $U$ is a non-face complement of $X$,
   and similarly for $p_B^{\uu *}(Y)$.
 \end{proof}

 For the Stanley-Reisner ideals in $k[z_{A \times B}]$ we have the following:

\begin{corollary} We follow the cases above:
  \begin{itemize}
  \item[a.] The ideal is generated by all $z^\mult_S$ where $S \sus A \times B$
    with $x^\mult_{p_A(S)} \in I_X$ or $y^\mult_{p_B(S)} \in I_Y$,
  \item[b.] The ideal is generated by all $z^\mult_S$ where $S \sus A \times B$
    with $x^\mult_{p_A(S)} \in I_X$ and $y^\mult_{p_B(S)} \in I_Y$,
 \item[c.] The ideal is generated by all $z^\mult_S$ where $S \sus A \times B$
    with $x^\mult_{p_A(S^c)^c} \in I_X$ or $y^\mult_{p_B(S^c)^c} \in I_Y$,
  \item[d.] The ideal is generated by all $z^\mult_S$ where $S \sus A \times B$
    with $x^\mult_{p_A(S^c)^c} \in I_X$ and $y^\mult_{p_B(S^c)^c} \in I_Y$,
  \end{itemize}
\end{corollary}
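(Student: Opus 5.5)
The plan is to work directly with squarefree monomials. A squarefree monomial $z^\mult_S$ lies in the Stanley-Reisner ideal of a complex $W$ on $A \times B$ if and only if $S$ is a non-face of $W$. Moreover, this ideal is generated by all $z^\mult_S$ with $S$ a non-face, since the non-faces form an up-set. So in each case it suffices to describe the non-faces $S \sus A \times B$ of the complex in the preceding theorem. Recall that the meet of two complexes is their intersection and the join is their union. Hence $S$ is a non-face of the meet iff it is a non-face of \emph{at least one} of the two complexes, and a non-face of the join iff it is a non-face of \emph{both}. This accounts for the ``or'' in a., c. and the ``and'' in b., d.

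For a. and b., I would apply Theorem \ref{thm:AB-fp*}a to $p_A : A \times B \pil A$. It says that $T$ is a face of $p_A^{\uu *}(X)$ iff $p_A(T) \in X$. Therefore $S$ is a non-face of $p_A^{\uu *}(X)$ iff $p_A(S) \notin X$, that is, iff $x^\mult_{p_A(S)} \in I_X$. The same holds for $p_B^{\uu *}(Y)$ with $y^\mult_{p_B(S)} \in I_Y$. Combining these with the meet/join description above gives a. and b.

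For c. and d., I would instead use Theorem \ref{thm:AB-fp*}b, so that $T$ is a face of $p_A^{\ii *}(X)$ iff $\core_{p_A}(T) \in X$. The one computation needed is the identity
\[ \core_{p_A}(S) = p_A(S^c)^c, \]
where complements are taken in $A \times B$ and $A$ respectively. To verify it, note that $a \in \core_{p_A}(S)$ iff the whole fiber $\{a\} \times B$ is contained in $S$. This holds iff no element of $S^c$ has first coordinate $a$, i.e.\ iff $a \notin p_A(S^c)$. Consequently $S$ is a non-face of $p_A^{\ii *}(X)$ iff $x^\mult_{p_A(S^c)^c} \in I_X$, and similarly on the $B$ side. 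Feeding this into the meet/join rule gives c. and d.

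The only step requiring any care is this core identity; everything else is a direct translation of Theorem \ref{thm:AB-fp*} together with the dictionary between non-faces and monomials. I would also remark that in each case the described set of $S$ is closed under enlargement. This confirms that the listed monomials are exactly the squarefree monomials of the ideal, rather than merely a generating set.
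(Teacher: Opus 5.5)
Your proof is correct and is essentially the paper's argument. The paper treats case d.\ via non-face complements: a non-face $\tilde D$ of the join has complement $\tilde U \sus U \times V$, where $U$ and $V$ are non-face complements of $X$ and $Y$. You instead go straight to Theorem \ref{thm:AB-fp*} and use the identity $\core_{p_A}(S) = p_A(S^c)^c$. These are the same fact in complemented form, since $S^c \sus U \times B$ iff $U^c \sus \core_{p_A}(S)$. Your version also makes explicit the step from $\tilde U \sus U \times V$ to the condition on $p_A(S^c)^c$, which the paper leaves to the reader.
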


\begin{proof}
 We do case d. The others are similar or simpler.  
 The ideal is generated by $z^\mult_{\tilde D}$ where
 $\tilde{D}$ is a non-face of the join.
In the cut $(\tilde{D}, \tilde{U})$,
the non-face complement $\tilde{U}$ is then in $U \times V$
where $U$ is a non-face complement  of $X$ and $V$ a
non-face complement of $Y$. This means that $x^\mult_{U^c}$ is in $I_X$ and
$y^\mult_{V^c}$ is in $I_Y$. Setting $S = \tilde{D}$ gives the statement. 
\end{proof}

\bibliographystyle{amsplain}
\bibliography{biblio}
\end{document}